%         "Using the LMS Class File"
% A Combined Sample File and Guide for Authors
% This file may be used as a template for writing a paper for submission to the LMS

\NeedsTeXFormat{LaTeX2e}

\documentclass{scrartcl}
%Include your preferred graphics and mathematics packages here,
%using the command \usepackage{}
\usepackage{amssymb,amsmath,amsthm}
\usepackage{dsfont}
\usepackage{float}
\usepackage{color}
%\usepackage[all]{xy}
%\SelectTips{cm}{12}
\usepackage{tikz}
\usetikzlibrary{automata,positioning,arrows}
\usepackage{authblk}
%The \newtheorem command is used to define theorem-like environments
%that normally REQUIRE A PROOF, for example:
\newtheorem{Theorem}{Theorem}[section] % 1st argument is your name for it
\newtheorem{Lemma}[Theorem]{Lemma}     % 2nd argument is what is printed
\newtheorem{Corollary}[Theorem]{Corollary}
\newtheorem{Definition}[Theorem]{Definition}
\newtheorem{Proposition}[Theorem]{Proposition}
\newtheorem{Example}[]{Example}
\newtheorem{Remark}{Remark}
%To control the numbering sequence of these environments, see
%Lamport's book on LaTeX [2, p. 193].

\usepackage[square, sort, comma, numbers]{natbib} %for bibtex

%The \newnumbered command can be used to define environments or
%independent statements that DO NOT REQUIRE A PROOF. The usual ones are:
% \newnumbered{Assertion}{Assertion}    % 1st argument is your name for it
% \newnumbered{Conjecture}{Conjecture}  % 2nd argument is what is printed
% 
% \newnumbered{Hypothesis}{Hypothesis}
% \newnumbered{Remark}{Remark}
% \newnumbered{Note}{Note}
% \newnumbered{Observation}{Observation}
% \newnumbered{Problem}{Problem}
% \newnumbered{Question}{Question}
% \newnumbered{Algorithm}{Algorithm}
% \newnumbered{Example}{Example}
% \newunnumbered{Notation}{Notation} % This is usually unnumbered
% The numbering sequence of these environments can be controlled in the
% same way as for \newtheorem; see Lamport's book on LaTeX, p. 193.

% The default LMS numbering of equations in long papers is (1.1), (1.2), (2.1), etc.
% In short papers, to change the numbering to (1), (2), etc., 'uncomment' the next line.
% \simpleequations
% Otherwise, use the AMS \numberwithin command.
\renewcommand{\tilde}{\widetilde}

\newcommand{\hQ}{\hat{\MQ}}
\newcommand{\hk}{\hat{k}}

\newcommand{\BT}{\mathbb{T}}
\newcommand{\BU}{\mathbb{U}}

\newcommand{\MA}{\mathds{A}}

\newcommand{\MN}{\mathds{N}}
\newcommand{\MZ}{\mathds{Z}}
\newcommand{\MQ}{\mathds{Q}}

\newcommand{\MR}{\mathds{R}}

\newcommand{\MH}{\mathds{H}}

\newcommand{\CO}{\mathcal{O}}

\newcommand{\ind}{\mathds{1}}
\newcommand{\GG}{\mathbb{G}}
\newcommand{\TW}{\widetilde{W}}

\newcommand{\fp}{\mathfrak{p}}

\newcommand{\fq}{\mathfrak{q}}

\newcommand{\UT}{\underline{T}}

\newcommand{\Algcat}[1]{#1\text{-}\mathsf{Alg}}

\newcommand{\ds}{/\!\!/}
\newcommand{\genus}{\mathrm{genus}}
\newcommand{\mass}{\mathrm{mass}}

\newcommand{\GL}{\mathrm{GL}}

\newcommand{\Sp}{\mathrm{Sp}}

 \newcommand{\U}{\mathrm{U}} 
\newcommand{\stab}{\operatorname{Stab}} 
 \newcommand{\Hom}{\operatorname{Hom}}

\newcommand{\End}{\mathrm{End}}

\newcommand{\An}[1]{ \scalebox{#1}{\begin{tikzpicture}
    \draw (0,0) -- (2.5,0);
    \draw (3.5,0) -- (6,0);
    \draw (0,0) -- (3,2);
    \draw (6,0) -- (3,2);

    \draw[fill=white] (0,0) circle(.1);
    \draw[fill=white] (2,0) circle(.1);
    \draw[fill=white] (4,0) circle(.1);
    \draw[fill=white] (6,0) circle(.1);
    \draw[fill=white] (3,2) circle(.1);

    %\node at (-1,0) {$\tilde{A}_n:$};
    \node at (0,0.35) {$1$};
    \node at (2,0.35) {$2$};
    \node at (4,0.35) {$n-1$};
    \node at (6,0.35) {$n$};
    \node at (3,2.35) {$0$};
    \node at (3.0,0) {$\hdots$};
\end{tikzpicture}}}

\newcommand{\Bn}[1]{\scalebox{#1}{
 \begin{tikzpicture}   
    \draw (0.5,1) -- (2,0);
    \draw (0.5,-1) -- (2,0);
    \draw (2,0) -- (4.5,0);
    \draw (5.5,0) -- (6,0);
    \draw (6,0.07) -- (8,0.07);
    \draw (6,-0.07) -- (8,-0.07);
    \draw (6.9,0.2) -- (7.1,0);
    \draw (6.9,-0.2) -- (7.1,0);
    \draw[fill=white] (0.5,1) circle(.1);
    \draw[fill=white] (0.5,-1) circle(.1);
    \draw[fill=white] (2,0) circle(.1);
    \draw[fill=white] (4,0) circle(.1);
    \draw[fill=white] (6,0) circle(.1);
    \draw[fill=white] (8,0) circle(.1);
    
    %\node at (-1,0) {$\tilde{B}_n:$};
    \node at (0.5,1.35) {$0$};
    \node at (0.5,-0.65) {$1$};
    \node at (2,0.35) {$2$};
    \node at (4,0.35) {$3$};
    \node at (6,0.35) {$n-1$};
    \node at (8,0.35) {$n$};
    \node at (5.0,0) {$\hdots$};
\end{tikzpicture}}
}

\newcommand{\Cn}[1]{\scalebox{#1}{
\begin{tikzpicture}
    \draw(0,0.07) -- (2,0.07);
    \draw(0,-0.07) -- (2,-0.07);
    \draw (1.1,0) -- (0.9,0.2);
    \draw (1.1,0) -- (0.9,-0.2);
    \draw (2,0) -- (4.5,0);
    \draw (5.5,0) -- (6,0);
    \draw (6,0.07) -- (8,0.07);
    \draw (6,-0.07) -- (8,-0.07);
    \draw (6.9,0) -- (7.1,0.2);
    \draw (6.9,0) -- (7.1,-0.2);
    \draw[fill=white] (0,0) circle(.1);
    \draw[fill=white] (2,0) circle(.1);
    \draw[fill=white] (4,0) circle(.1);
    \draw[fill=white] (6,0) circle(.1);
    \draw[fill=white] (8,0) circle(.1);
    
    %\node at (-1,0) {$\tilde{C}_n:$};
    \node at (0,0.35) {$0$};
    \node at (2,0.35) {$1$};
    \node at (4,0.35) {$2$};
    \node at (6,0.35) {$n-1$};
    \node at (8,0.35) {$n$};
    \node at (5.0,0) {$\hdots$};
\end{tikzpicture}}
}

\newcommand{\Dn}[1]{\scalebox{#1}{
 \begin{tikzpicture}
    \draw (0.5,1) -- (2,0);
    \draw (0.5,-1) -- (2,0);
    \draw (2,0) -- (4.5,0);
    \draw (5.5,0) -- (6,0);
    \draw (6,0) -- (7.5,1);
    \draw (6,0) -- (7.5,-1);

    \draw[fill=white] (0.5,1) circle(.1);
    \draw[fill=white] (0.5,-1) circle(.1);
    \draw[fill=white] (2,0) circle(.1);
    \draw[fill=white] (4,0) circle(.1);
    \draw[fill=white] (6,0) circle(.1);
    \draw[fill=white] (7.5,1) circle(.1);
    \draw[fill=white] (7.5,-1) circle(.1);

   % \node at (-1,0) {$\tilde{D}_n:$};
    \node at (0.5,1.35) {$0$};
    \node at (0.5,-0.65) {$1$};
    \node at (2,0.35) {$2$};
    \node at (4,0.35) {$3$};
    \node at (5.9,0.35) {$n-2$};
    \node at (7.5,1.35) {$n-1$};
    \node at (7.5,-0.65) {$n$};
    \node at (5.0,0) {$\hdots$};
\end{tikzpicture}}
}

\newcommand{\Esix}[1]{\scalebox{#1}{ \begin{tikzpicture}
    \draw (0,0) -- (8,0);
    \draw (4,0) -- (4,2);

    \draw[fill=white] (0,0) circle(.1);
    \draw[fill=white] (2,0) circle(.1);
    \draw[fill=white] (4,0) circle(.1);
    \draw[fill=white] (6,0) circle(.1);
    \draw[fill=white] (8,0) circle(.1);
    \draw[fill=white] (4,1) circle(.1);
    \draw[fill=white] (4,2) circle(.1);

    %\node at (-1,0) {$\tilde{E}_6:$};
    \node at (4.35,2) {$0$};
    \node at (4.35,1) {$2$};
    \node at (0,0.35) {$1$};
    \node at (2,0.35) {$3$};
    \node at (4.35,0.35) {$4$};
    \node at (6,0.35) {$5$};
    \node at (8,0.35) {$6$};
\end{tikzpicture}}
}

\newcommand{\Eseven}[1]{ \scalebox{#1}{\begin{tikzpicture}
    \draw (0,0) -- (9,0);
    \draw (4.5,0) -- (4.5,1);

    \draw[fill=white] (0,0) circle(.1);
    \draw[fill=white] (1.5,0) circle(.1);
    \draw[fill=white] (3,0) circle(.1);
    \draw[fill=white] (4.5,0) circle(.1);
    \draw[fill=white] (6,0) circle(.1);
    \draw[fill=white] (7.5,0) circle(.1);
    \draw[fill=white] (9,0) circle(.1);
    \draw[fill=white] (4.5,1) circle(.1);

   % \node at (-1,0) {$\tilde{E}_7:$};
    \node at (0,0.35) {$0$};
    \node at (1.5,0.35) {$1$};
    \node at (3,0.35) {$3$};
    \node at (4.85,0.35) {$4$};
    \node at (6,0.35) {$5$};
    \node at (7.5,0.35) {$6$};
    \node at (9,0.35) {$7$};
    \node at (4.85,1) {$2$};
\end{tikzpicture}}
 }
\newcommand{\Eeight}[1]{\scalebox{#1}{
 \begin{tikzpicture}   
    \draw (0,0) -- (10.5,0);
    \draw (7.5,0) -- (7.5,1);

    \draw[fill=white] (0,0) circle(.1);
    \draw[fill=white] (1.5,0) circle(.1);
    \draw[fill=white] (3,0) circle(.1);
    \draw[fill=white] (4.5,0) circle(.1);
    \draw[fill=white] (6,0) circle(.1);
    \draw[fill=white] (7.5,0) circle(.1);
    \draw[fill=white] (9,0) circle(.1);
    \draw[fill=white] (10.5,0) circle(.1);
    \draw[fill=white] (7.5,1) circle(.1);

   % \node at (-1,0) {$\tilde{E}_8:$};
    \node at (0,0.35) {$0$};
    \node at (1.5,0.35) {$1$};
    \node at (3,0.35) {$3$};
    \node at (4.5,0.35) {$4$};
    \node at (6,0.35) {$5$};
    \node at (7.85,0.35) {$6$};
    \node at (9,0.35) {$7$};
    \node at (10.5,0.35) {$8$};
    \node at (7.85,1) {$2$};
\end{tikzpicture}}
}
\newcommand{\Ffour}[1]{ \scalebox{#1}{
\begin{tikzpicture}
    \draw (0,0) -- (4,0);
    \draw (4,0.07) -- (6,0.07);
    \draw (4,-0.07) -- (6,-0.07);
    \draw (6,0) -- (8,0);
    \draw (4.9,0.2) -- (5.1,0);
    \draw (4.9,-0.2) -- (5.1,0);

    \draw[fill=white] (0,0) circle(.1);
    \draw[fill=white] (2,0) circle(.1);
    \draw[fill=white] (4,0) circle(.1);
    \draw[fill=white] (6,0) circle(.1);
    \draw[fill=white] (8,0) circle(.1);

   % \node at (-1,0) {$\tilde{F}_4:$};
    \node at (0,0.35) {$0$};
    \node at (2,0.35) {$1$};
    \node at (4,0.35) {$2$};
    \node at (6,0.35) {$3$};
    \node at (8,0.35) {$4$};
\end{tikzpicture}}
}
\newcommand{\Gtwo}[1]{\scalebox{#1}{
\begin{tikzpicture}
    \draw (2,0.07) -- (4,0.07);
    \draw (0,0) -- (4,0);
    \draw (2,-0.07) -- (4,-0.07);
    \draw (3.1,0) -- (2.9,0.2);
    \draw (3.1,0) -- (2.9,-0.2);
    \draw[fill=white] (0,0) circle(.1);
    \draw[fill=white] (2,0) circle(.1);
    \draw[fill=white] (4,0) circle(.1);
    
   % \node at (-1,0) {$\tilde{G}_{2}:$};
    \node at (0,0.35) {$0$};
    \node at (2,0.35) {$1$};
    \node at (4,0.35) {$2$};
\end{tikzpicture}}
}

% TOP MATTER
\title{Simultaneous computation of Hecke operators} % This is the full title of the paper
% Use lowercase letters in title except for proper names
% Avoid equations in title if possible
% Do not use the \thanks{} command; use \extraline{} instead (see below).

 \author{Sebastian Sch\"onnenbeck\thanks{The author is supported by the DFG research training group \emph{Experimental and constructive algebra} (GRK 1632).} }
\affil{RWTH Aachen University\\
   Lehrstuhl D f\"ur Mathematik\\
   Pontdriesch 14/16, 52052 Aachen\\
   Germany\\
  \texttt{sebastian.schoennenbeck@rwth-aachen.de}
}

\begin{document}
\maketitle

\begin{abstract}
We present a method to compute two Hecke operators acting on a space of algebraic modular forms simultaneously based on an idea of Eichler's. We show that in certain cases this method can be used to obtain the action of the full Hecke algebra with respect to a hyperspecial subgroup and use it to compute Hecke eigenforms for compact forms of symplectic groups.
\end{abstract}

%\part{Use this type of header for very long papers only}
% use lowercase except for proper names

\section{Introduction} % use lowercase except for proper names
A classical task in the arithmetic theory of quadratic forms is the enumeration of a system of representatives of the isometry classes in a genus, i.e. given a quadratic space $(V,q)$ over $\MQ$ and a $\MZ$-lattice $L \subset V$ decompose the set of all latices that are locally isometric to $L$ at every prime $p$ into (global) isometry classes. The number of isometry classes is known to be finite (even completely known without computation if $q$ is indefinite) and the task is usually settled by use of Kneser's $p$-neighbour method (cf. \cite{KneserKlassenzahlenDefinit}). 

Let us now replace the orthogonal group (with respect to $q$) with another reductive linear algebraic group $\GG$ over $\MQ$. In this situation we can still ask the same question, i.e. given a faithful representation $\GG \hookrightarrow \GL_n$ and a lattice $L \subset \MQ^n$ we would like to decompose the $\GG(\hQ)$-orbit of $L$ (called the $\GG$-genus) into $\GG(\MQ)$-orbits (called $\GG$-classes), where $\hQ$ denotes the finite adeles of $\MQ$. Again this question is well-studied, for example it is known that the class number is one, if $\GG$ is simply connected, absolutely simple and $\GG(\MR)$ is not compact by virtue of the strong approximation property. On the other hand if $\GG(\MR)$ is compact there are analogues of Kneser's neighbour method that allow us to tackle this problem algorithmically (cf. \cite{GreenbergVoightLatticeMethods} and the series \cite{ASAPquadratic,ASAPclassical,ASAPexceptional}). A helpful tool in the latter case is provided by so-called mass formulae. Since $\GG(\MR)$ is assumed to be compact the stabilizer of a lattice in $\GG(\MQ)$ is finite and hence so is the quantity
\begin{equation}
 \mass(L)=\mass(\genus(L))=\sum_M \frac{1}{|\stab_{\GG(\MQ)}(M)|}
\end{equation}
where the sum runs over a system of representatives of the $\GG$-classes in the $\GG$-genus of $L$. Since this quantity only depends on local information on $L$ it is (as long as $\GG$ and $L$ are suitably well-behaved) computable without actually writing down a system of representatives. The probably best-known instance of this principle is the Smith-Minkowski-Siegel mass formula in the case of orthogonal groups (\cite{SiegelAnalytischeTheorie}) which was later generalized to the case of arbitrary classical groups (\cite{GanYuGroupSchemes}). These mass formulae often a priori only compute the mass of quite restrictive genera (e.g. even unimodular lattices in the case of orthogonal group), hence it is desirable to be able to compare the masses of different genera. The idea how to do this goes back to the work of Eichler (cf. \cite{EichlerAehnlichkeitsklassen}) and works as follows: Let $L,L'$ be two lattices in faithful $\GG$-modules and let $K,K'$ be their respective stabilizers in $\GG(\hQ)$, then
\begin{equation}
 \mass(L)[K:(K \cap K')]=\mass(L')[K':(K \cap K')].
\end{equation}
Moreover the idea behind this can be used to actually compute representatives for certain genera starting from representatives of another genus (cf. \cite{NebeBachoc}). 

In this article we want to apply Eichler's method to the computation of Hecke operators acting on algebraic modular forms as introduced by Gross (\cite{GrossAlgebraicModularForms}). Let $\GG$ be a connected, reductive linear algebraic group over $\MQ$\footnote{We only limit ourselves to the rationals for the purpose of this introduction while later working over arbitrary (totally real) number fields.} such that $\GG(\MR)$ is compact and choose an open, compact subgroup $K \subset \GG(\hQ)$ as well as a an irreducible $\MQ$-rational representation $V$ of $\GG$. In this notation the space of algebraic modular forms of level $K$ and weight $V$ is the space
\begin{equation}
 M(V,K)=\left\{ f:\GG(\hat{k}) \rightarrow V ~|~  \substack{f(g \gamma \kappa)=gf(\gamma) \text{ for } \gamma \in 
\GG(\hat{k}),\\~g \in \GG(k),\kappa \in K  }\right\}.
\end{equation}
This space comes equipped with an action of the Hecke algebra $H_K$ of compactly supported $K$-bi-invariant functions under convolution and our main algorithmic goal is the explicit computation of this action for given instances of $\GG$, $K$ and $V$. The first general work formulated in this language is due to Lanksy and Pollack (cf. \cite{LanskyDecomposition,LanskyPollack}) who worked in a very general setup (which will be of considerable use to us later on) and then performed explicit calculations for compact forms of $G_2$ and $\mathrm{PGSp}_4$ over the rationals. Cunningham and Demb\'el\'e computed Siegel modular forms, i.e. algebraic modular forms for compact forms of $\mathrm{GSp}_4$ over totally real fields of narrow class number one (cf. \cite{CunninghamDembeleGenus2}) and Loeffler performed computations for unitary groups (of degree $2$ and $3$) over imaginary quadratic number fields (cf. \cite{LoefflerExplicitComputations}). In addition Greenberg and Voight introduced a general framework of lattice methods for algebraic modular forms on classical groups, in particular unitary and orthogonal groups (cf. \cite{GreenbergVoightLatticeMethods}).

Here we describe a method to compute two Hecke operators (acting on distinct spaces of algebraic modular forms) simultaneously by employing Eichler's idea. To that end we introduce for two open compact subgroups $K_1,K_2$ of $\GG(\hQ)$ the so called intertwining operator $T(K_1,K_2):M(V,K_1) \rightarrow M(V,K_2)$. It turns out that $T(K_2,K_1)$ can easily be obtained from $T(K_1,K_2)$ and that $T(K_2,K_1)T(K_1,K_2)$ and $T(K_1,K_2)T(K_2,K_1)$ act as elements of $H_{K_1}$ and $H_{K_2}$, respectively, whose action can consequently be obtained by only computing $T(K_1,K_2)$. If $K_1$ and $K_2$ only differ at a single prime $p$ where they are two parahoric subgroups containing a common Iwahori subgroup it is possible to determine the coefficients of $T(K_2,K_1)T(K_1,K_2)$ (which we call the Eichler element of $K_1$ and $K_2$) in the standard basis of $H_{K_1}$. This allows us to study how many of the generators of $H_{K_1}$ can be computed in this way and it turns out that for $\GG$ of type $C_n$ we can obtain the action of the full (local) Hecke algebra. Finally we apply our method to the computation of algebraic modular forms for compact forms of symplectic groups (over totally real number fields) with respect to a parahoric level structure defined by a lattice, making it possible to hand off most of the computation to the Plesken-Souvignier algorithm (\cite{PleskenSouvignier}) for isometry testing.  

This article is organized as follows. We start by reviewing the basic terminology regarding algebraic modular forms and Hecke algebras (section \ref{heckealgebrassection}). In section \ref{doublecosetssection} we review the structure of double cosets in (split semisimple) $p$-adic groups. In section \ref{eichlersection} we introduce the concept of intertwining operators and Eichler elements and study their properties. Finally we present some explicit computational examples in section \ref{resultssection}.

\section{Algebraic modular forms and Hecke operators}\label{heckealgebrassection}
Here we want to review the basic notions in the theory of algebraic modular forms. The primary reference is Gross's original article \cite{GrossAlgebraicModularForms}.
\subsection{Algebraic modular forms}
Let $k$ be a totally real number field with ring of integers $\CO_k$ and let
\begin{equation*} 
 k_\infty:=\MR \otimes_\MQ k \cong \MR^{[k:\MQ]}.
\end{equation*}
Considering the finite places we set $\hat{\MQ}$ the finite adeles of $\MQ$ (i.e. the elements of $\prod_{p \text{ 
prime}}\MQ_p$ which are integral at all but finitely many places) and $\hat{k}:=k \otimes_\MQ \hat{\MQ}$ the finite 
adeles of $k$. We identify $\hat{k}$ with the set
\begin{equation}
 \hat{k}=\left\{(x_\fp)_\fp \in \prod_{\fp\subset \CO_k\text{ prime}}k_\fp~|~x_\fp \in \CO_\fp \text{ f.a.a. } \fp \right\}
\end{equation}
  and we denote the (full) ring of adeles of $k$ by $\MA_k:=k_\infty \times \hk$. 

Let $\GG$ be a connected reductive linear algebraic group over $k$ such that $\GG(k_\infty)$ is compact (had we 
not 
already stipulated $k$ to be totally real we would get it as a consequence here) and let $\rho:\GG \rightarrow 
\GL_V$ be an irreducible finite-dimensional rational representation of $\GG$ defined over some extension of $k$.

\begin{Definition}
 Let $K$ be an open compact subgroup of $\GG(\hk)$. The space of algebraic modular forms of weight $V$ and level $K$ is defined as
\begin{equation}
\begin{split}
M(V,K)&=\left\{ f:\GG(\hat{k})/ K \rightarrow V ~|~  \substack{f(g\gamma )=gf(\gamma) \text{ for } \\ \gamma \in \GG(\hat{k}),~g \in \GG(k) }\right\} \\
&\cong \left\{ f:\GG(\hat{k}) \rightarrow V ~|~  \substack{f(g \gamma \kappa)=gf(\gamma) \text{ for } \gamma \in 
\GG(\hat{k}),\\~g \in \GG(k),\kappa \in K  }\right\}.
\end{split}
\end{equation}
\end{Definition}

Let now $K\subset \GG(\hk)$ be an open compact subgroup. The structure of $M(V,K)$ is summarized in the following proposition.
\begin{Proposition}[\protect{\cite[Prop. (4.3),(4.5)]{GrossAlgebraicModularForms}}]
 Set $\Sigma_K:=\GG(k) \backslash \GG(\hat{k}) / K$. The following holds:
\begin{enumerate}
 \item The set $\Sigma_K$ is finite.
 \item If $\alpha_i,1\leq i \leq h,$ is a system of representatives for $\Sigma_K$ and 
\begin{equation}
 \Gamma_i:=\GG(k) \cap \alpha_i K \alpha_i^{-1},
\end{equation}
then
\begin{equation}
 M(V,K) \rightarrow \bigoplus_{i=1}^h V^{\Gamma_i},~f\mapsto (f(\alpha_1),...,f(\alpha_h))
\end{equation}
is an isomorphism of vector spaces, where $V^{\Gamma_i}$ denotes the $\Gamma_i$ fixed points in $V$. In particular $M(V,K)$ is finite-dimensional.
\end{enumerate}
\end{Proposition}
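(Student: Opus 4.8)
The plan is to treat the two assertions separately: assertion (1) is a finiteness input coming from the arithmetic of $\GG$, whereas assertion (2) is a formal parametrization of forms by their values on a chosen set of double coset representatives.

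For (1), I would first use that $\MA_k = k_\infty \times \hk$ splits as a product of topological rings, so $\GG(\MA_k) = \GG(k_\infty) \times \GG(\hk)$, and $U := \GG(k_\infty) \times K$ is an open compact subgroup of $\GG(\MA_k)$. Since $\GG(k)$ is embedded diagonally, absorbing the archimedean component of any representative into the $\GG(k_\infty)$-factor of $U$ produces a natural bijection
\[
\GG(k) \backslash \GG(\MA_k) / U \;\xrightarrow{\ \sim\ }\; \GG(k)\backslash \GG(\hk)/K = \Sigma_K ,
\]
so it suffices to prove the left-hand set is finite. Here I would invoke that the hypothesis ``$\GG(k_\infty)$ compact'' (for $k$ totally real) forces $\GG$ to be anisotropic over $k$, whence the Borel--Harish-Chandra finiteness/compactness theorem gives that $\GG(k)\backslash \GG(\MA_k)$ is compact. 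Because the projection $\GG(\MA_k) \to \GG(k)\backslash\GG(\MA_k)$ is open, the images of the open double cosets $\GG(k)\,g\,U$ form a cover of this compact space by pairwise disjoint open sets; a finite subcover must already exhaust it, so there are only finitely many such double cosets.

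For (2), I would first check that the map is well defined and linear. Given $\gamma \in \Gamma_i$, write $\gamma = \alpha_i \kappa \alpha_i^{-1}$ with $\kappa \in K$; then left $\GG(k)$-equivariance and right $K$-invariance give $\gamma f(\alpha_i) = f(\gamma\alpha_i) = f(\alpha_i \kappa) = f(\alpha_i)$, so $f(\alpha_i) \in V^{\Gamma_i}$ and the image indeed lands in $\bigoplus_i V^{\Gamma_i}$; linearity is immediate. Injectivity follows because every $\gamma \in \GG(\hk)$ lies in some double coset, $\gamma = g\,\alpha_i\,\kappa$ with $g \in \GG(k)$ and $\kappa \in K$, so that $f(\gamma) = g\,f(\alpha_i)$; hence $f$ is completely determined by the tuple $\bigl(f(\alpha_i)\bigr)_i$. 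For surjectivity, given $(v_i)_i \in \bigoplus_i V^{\Gamma_i}$ I would define $f(\gamma) := g\,v_i$ whenever $\gamma = g\,\alpha_i\,\kappa$, and verify independence of the chosen decomposition: if $g\,\alpha_i\,\kappa = g'\,\alpha_i\,\kappa'$ then $g'^{-1}g = \alpha_i(\kappa'\kappa^{-1})\alpha_i^{-1} \in \Gamma_i$, and the $\Gamma_i$-invariance of $v_i$ yields $g\,v_i = g'\,v_i$. The resulting $f$ is then readily seen to satisfy the defining equivariance, so the map is surjective and hence an isomorphism. Finite-dimensionality of $M(V,K)$ then follows at once from the finiteness of $h$ in (1) together with $\dim V < \infty$, with $\dim M(V,K) = \sum_{i=1}^h \dim V^{\Gamma_i}$.

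The only genuinely non-formal ingredient is the finiteness statement in (1); everything in (2) is bookkeeping with double cosets. The main obstacle is therefore to supply the reduction-theoretic input, namely to justify the compactness (or at least the finite covolume together with discreteness of $\GG(k)$) of $\GG(k)\backslash\GG(\MA_k)$ from the compactness of $\GG(k_\infty)$, rather than anything concerning the modular forms themselves.
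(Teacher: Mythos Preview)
The paper does not actually prove this proposition: it is quoted verbatim from Gross \cite[Prop.~(4.3),(4.5)]{GrossAlgebraicModularForms} and used as a black box, so there is no ``paper's own proof'' to compare against beyond the reference. Your argument is correct and is essentially the standard one (and the one Gross gives): part~(1) reduces, via the bijection $\GG(k)\backslash\GG(\MA_k)/(\GG(k_\infty)\times K)\cong\Sigma_K$, to the compactness of $\GG(k)\backslash\GG(\MA_k)$, which follows from Borel--Harish-Chandra once one notes that compactness of $\GG(k_\infty)$ forces $\GG$ to be $k$-anisotropic (and hence, being reductive, to have no nontrivial $k$-rational characters); part~(2) is exactly the double-coset bookkeeping you wrote down.

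One small remark on presentation: in your surjectivity step you should also note that the index $i$ in a decomposition $\gamma=g\,\alpha_i\,\kappa$ is uniquely determined by $\gamma$ (the double cosets are disjoint), so that the only ambiguity to resolve is indeed the one you handle via $\Gamma_i$-invariance. Otherwise the write-up is complete.
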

Note that the groups $\Gamma_i$ are discrete subgroups of the compact group $\GG(k_\infty)$ hence finite. Moreover since $\GG(k_\infty)$ is compact the space $V$ carries a $\GG(k)$-invariant (totally positive) inner product, $\langle-,-\rangle$, which we can use to define a Peterson scalar product on the space $M(V,K)$. As before let $\alpha_i,1\leq i \leq h,$ be a system of representatives for $\Sigma_K$ and $\Gamma_i=\GG(k) \cap \alpha_i K \alpha_i^{-1}$. For $f,f' \in M(V,K)$ we define
\begin{equation}\label{InnerProduct}
 \langle f,f' \rangle_M :=\sum_{i=1}^h \frac{1}{|\Gamma_i|}\langle f(\alpha_i),f'(\alpha_i)\rangle.
\end{equation}
The so defined map $\langle-,-\rangle_M$ is obviously a totally positive definite symmetric bilinear from on $M(V,K)$ and does not depend on the choice of our representatives $\alpha_i$.

\subsection{Hecke operators}
We keep the notation from the previous subsection.

The space of algebraic modular forms comes equipped with the action of the Hecke algebra of $\GG$ with respect to $K$.
\begin{Definition}
 The Hecke algebra $H_K=H(\GG,K)$ is the ($\MQ$-)algebra of all locally constant, compactly supported functions $\GG(\hat{k}) \rightarrow \MQ$ which are $K$-bi-invariant. The multiplication in $H_K$ is given by convolution with respect to the (unique) Haar measure $d\lambda_K$ giving the compact group $K$ measure $1$, i.e.
\begin{equation}
 (F \cdot F')(\gamma)=\int_{\GG(\hat{k})}F(x)F'(x^{-1}\gamma)d\lambda_K(x)=\int_{\GG(\hat{k})}F(\gamma y^{-1})F'(y)d\lambda_K(y)
\end{equation}
for $F,F' \in H_K$ and $\gamma \in \GG(\hat{k})$.
\end{Definition}

The algebra $H_K$ has a canonical basis given by the characteristic functions of the double cosets with respect to $K$, $\ind_{K\gamma K}$, $\gamma \in \GG(\hk)$. The action of $\ind_{K\gamma K}\in H_K$ on $M(V,K)$ is given as follows: Decompose $K\gamma K= \sqcup_{i} \gamma_i K$, then $\ind_{K \gamma K}$ acts via the operator $T(\gamma)=T(K\gamma K) \in \End(M(V,K))$ defined by
\begin{equation}
(T(\gamma)f)(x)=\sum_i f(x\gamma_i) \text{ for } f \in M(V,K),x\in \GG(\hk).
\end{equation}

The additive extension of $T$ to $H_K$ is a homomorphism of $\MQ$-algebras and the action of $H_K$ on $M(V,K)$ is compatible with the inner product on $M(V,K)$ in the following sense.
\begin{Proposition}[\protect{\cite[Prop. (6.9)]{GrossAlgebraicModularForms}}]\label{AdjointOperator}
 The adjoint operator of $T(\gamma)$ is given by $T(\gamma^{-1})$ (as an element of $\End(M(V,K))$).
\end{Proposition}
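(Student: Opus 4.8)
The plan is to reinterpret the Petersson product \eqref{InnerProduct} as an integral over the adelic quotient $\GG(k)\backslash\GG(\hk)$ and then transport the Hecke operator across the pairing by two changes of variables. First I would observe that for $f,f' \in M(V,K)$ the function $x \mapsto \langle f(x),f'(x)\rangle$ on $\GG(\hk)$ is left $\GG(k)$-invariant, by $\GG(k_\infty)$-invariance of $\langle-,-\rangle$ together with $f(gx)=gf(x)$, and right $K$-invariant, so it descends to the finite set $\GG(k)\backslash\GG(\hk)/K$. Normalising the Haar measure on $\GG(\hk)$ so that $K$ has volume $1$ and pushing it forward to $\GG(k)\backslash\GG(\hk)$, the mass of the orbit of $\alpha_i$ is exactly $1/|\Gamma_i|$, since $\Gamma_i$ is the stabiliser of $\alpha_i K$ in $\GG(k)$ and acts freely on the coset $\alpha_i K$ of volume $1$. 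Hence
\[
\langle f,f'\rangle_M = \int_{\GG(k)\backslash\GG(\hk)} \langle f(x),f'(x)\rangle\, d\mu(x),
\]
where $\mu$ is the induced right $\GG(\hk)$-invariant measure. Establishing this identification cleanly (in particular that $\GG(k)$ sits discretely in $\GG(\hk)$ because $\GG(k_\infty)$ is compact, and that the orbit masses come out as claimed) is where most of the care is needed.

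Next I would rewrite the Hecke operator as a convolution. With $K\gamma K = \bigsqcup_i \gamma_i K$ and $K$ of volume $1$, right $K$-invariance of $f$ gives
\[
(T(\gamma)f)(x) = \sum_i f(x\gamma_i) = \int_{\GG(\hk)} \ind_{K\gamma K}(y)\, f(xy)\, d\lambda_K(y).
\]
Substituting this into the integral formula for $\langle T(\gamma)f,f'\rangle_M$ and interchanging the two integrations (legitimate, since the $y$-integral is really a finite sum) reduces the claim to moving $y$ from the first argument of the pairing to the second. I would do this by the substitution $x \mapsto xy^{-1}$, using right $\GG(\hk)$-invariance of $\mu$, which turns $\langle f(xy),f'(x)\rangle$ into $\langle f(x),f'(xy^{-1})\rangle$.

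Finally, pulling the $y$-integral back inside the pairing identifies the second argument as $\int_{\GG(\hk)} \ind_{K\gamma K}(y)\, f'(xy^{-1})\,d\lambda_K(y)$. Here I would change variables $y \mapsto y^{-1}$: since $\GG(\hk)$ is unimodular the Haar measure is inversion-invariant, and $(K\gamma K)^{-1} = K\gamma^{-1}K$, so this integral is precisely $(T(\gamma^{-1})f')(x)$. This yields $\langle T(\gamma)f,f'\rangle_M = \langle f,T(\gamma^{-1})f'\rangle_M$, which is the assertion. Symmetry and bilinearity of $\langle-,-\rangle$ make all the rearrangements purely formal; the genuine inputs are unimodularity of $\GG(\hk)$ and the integral reinterpretation of the Petersson product, so I expect the measure-theoretic bookkeeping of the first step to be the main obstacle.
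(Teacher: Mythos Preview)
The paper does not give its own proof of this proposition; it simply quotes it from Gross \cite[Prop.~(6.9)]{GrossAlgebraicModularForms}. Your argument is the standard one (and is essentially what Gross does): rewrite the Petersson product as an integral over $\GG(k)\backslash\GG(\hk)$ with respect to the Tamagawa-type measure, express $T(\gamma)$ as convolution against $\ind_{K\gamma K}$, and then shift the convolution variable across the pairing using right invariance of the quotient measure and inversion invariance of Haar measure on the unimodular group $\GG(\hk)$. All the steps you outline are correct, and your identification of the measure of the double coset $\GG(k)\alpha_iK$ as $1/|\Gamma_i|$ is exactly right, since the fibres of $\alpha_iK\to\GG(k)\backslash\GG(k)\alpha_iK$ are the $\Gamma_i$-orbits.
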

In particular this means that $M(V,K)$ is a semisimple $H_K$-module.

\subsection{Open compact subgroups arising from lattices}

A classical way in which open, compact subgroups of $\GG(\hk)$ arise is as stabilizers of lattices. To that end let $\GG \hookrightarrow \GL_W$ be a faithful $k$-rational representation of $\GG$ and $L \subset W$ a (full) $\CO_k$-lattice in $W$. The group $\GL_W(\hk)$ (and thus also $\GG(\hk)$) acts on the set of lattices in $W$ and we obtain an open, compact subgroup $K_L=\stab_{\GG(\hk)}(L)$ with
\begin{equation}
 K_L=\prod_{\fp \text{ prime}}K_{L,\fp},\text{ where } K_{L,\fp}=\stab_{\GG(k_\fp)}(L \otimes \CO_{\fp}).
\end{equation}
Moreover $K_{L,\fp}$ is a hyperspecial maximal compact subgroup for all but finitely many finite primes of $\CO_k$ (cf. \cite[Prop. 3.3]{CohenNebePlesken}).

In this situation the task of decomposing $\GG(\hk)$ into $\GG(k)$-$K_L$-double cosets becomes the task of finding representatives for the isomorphism classes in the ($\GG$-)genus of $L$, i.e. decomposing the $\GG(\hk)$-orbit of $L$ into $\GG(k)$-orbits. The class number $|\Sigma_{K_L}|$ is then also called the class number of $L$ and the complexity of $\Sigma_{K_L}$ is in some sense measured by the mass of $L$,
\begin{equation}
 \mass(L):=\mass_\GG(L):=\sum_{i=1}^h \frac{1}{|\Gamma_i|},
\end{equation}
where $\Gamma_i=K_L \cap \alpha_i K_L \alpha_i^{-1}$ and $\GG(\hk)=\bigsqcup_{i=1}^h \GG(k)\alpha_iK_L$, which means that $\{L_i=\alpha_i L|1\leq i \leq h\}$ is a system of representatives for the genus of $L$.

The mass of $L$ depends only on local information and can be computed without writing down a system of representatives for the genus. Formulas to do so are readily available in the literature (see for example \cite{GanYuGroupSchemes} for the case of classical groups and \cite{CohenNebePlesken} for semisimple groups split at every prime). These formulas rely on the fact that it is possible to compare the masses of two lattices, an idea that first appeared in the work of Eichler (cf. \cite[Satz 8]{EichlerAehnlichkeitsklassen}) and works as follows. Let $L_1,L_2$ be two lattices in faithful $\GG$-modules (not necessarily the same one) and $K_1,K_2 \leq \GG(\hk)$ the associated open and compact subgroups. Then
\begin{equation}
 \mass(L_1)\cdot [K_1:(K_1 \cap K_2)]=\mass(L_2)\cdot [K_2:(K_1 \cap K_2)].
\end{equation}
Note that the quantities $[K_1:(K_1 \cap K_2)]$ and $[K_2:(K_1 \cap K_2)]$ are indeed finite since $K_1 \cap K_2$ is again open and compact and thus of finite index in both $K_1$ and $K_2$.

\section{Double cosets in $p$-adic groups}\label{doublecosetssection}
We fix the following notation which is essentially identical to that used in \cite{LanskyPollack} and \cite{LanskyDecomposition} to ensure compatibility.

Let $F$ be a local field of characteristic $0$ with ring of integers $\CO_F$, uniformizer $\pi$ and finite residue class field of order $q$ and characteristic $p$.\\ 
Furthermore let $\GG$ be a connected, semisimple, linear algebraic group defined and split over $F$. 
There is a Chevalley group scheme $\underline{G}$ over $\CO_F$ such that $K:=\underline{G}(\CO_F) \leq 
\underline{G}(F)=\GG(F)$ is a hyperspecial maximal compact subgroup and such that the special fiber 
$\underline{G}_{\CO_F/\pi\CO_F}$ is again semisimple of the same type as $\GG$.

Let $\underline{T} \leq \underline{G}$ be a split maximal torus scheme (whose generic fiber $\underline{T}_{F}$ 
we call $\BT$). We set $\mathbb{N}_{\BT}$ the normalizer of $\BT$ in $\GG$, i.e. 
$\mathbb{N}_\BT(A):=N_{\GG(A)}(\BT(A))$ for all $A \in \Algcat{F}$. Furthermore let $X^*(\BT)=\Hom(\BT,\GG_m)$ and
 $X_*(\GG_m,\BT)$ be the character and cocharacter module of $\BT$, respectively. Let $\Phi \subset 
X^*(\BT)$ be the (finite) set of roots (i.e. the non-trivial weights occurring in the adjoint representation). We 
choose some positive subset $\Phi^+\subset \Phi$ (or equivalently a Borel subgroup $\BT(F) \subset B \subset 
\GG(F)$) and denote by $\Delta$ the corresponding simple (or indecomposable) roots. Dually to this, let $\Phi^\vee\subset 
X_*(\BT)$ be the set of coroots and $\alpha \mapsto \alpha^\vee$ the usual bijective correspondence. 

Given $\alpha \in \Phi$ we denote by $x_\alpha: \GG_a \rightarrow \underline{U_\alpha}$ the isomorphism between 
$\GG_a$ and the one-dimensional unipotent subgroup scheme $\underline{U_\alpha} \leq \underline{G}$ (whose 
generic fiber $(\underline{U_\alpha})_{F}$ we will call $\BU_\alpha$). The morphism $x_\alpha$ when considered as 
a map $F \rightarrow \BU_\alpha(F)$ restricts to $\CO_F$ with $\underline{U_\alpha}(\CO_F)=\BU_\alpha(F) \cap 
K$.

The (finite) Weyl group of $\GG$, defined as $\mathbb{N}_\BT(F)/\BT(F)=(\mathbb{N}_\BT(F) \cap 
K)/\underline{T}(\CO_F)$, will here be denoted by $W_0$ (to avoid ambiguity), while we use the symbol $\tilde{W}$ 
for the extended affine Weyl group $\mathbb{N}_\BT(F)/\underline{T}(\CO_F)$. Then both $W_0$ and $\tilde{W}$ 
act as groups of affine transformations on the vector space $X_*(\BT) \otimes_\MZ \MR$ and $W_0$ is precisely 
the stabilizer of $0 \in X_*(\BT)\otimes_\MZ \MR$ in $\tilde{W}$. Furthermore there is an isomorphism
\begin{equation}
 \tilde{W} \cong X_*(\BT) \rtimes W_0
\end{equation}
where we embed $X_*(\BT)$ into $\tilde{W}$ as a normal subgroup of translations (acting in the obvious way on 
$X_*(\BT) \otimes_\MZ \MR$). In this sense set $t_\lambda=t(\lambda) \in \tilde{W}$ the translation corresponding to $\lambda \in 
X_*(\BT)$ which yields the following identity:
\begin{equation}
 w^{-1}t_\lambda w = t_{\lambda w},
\end{equation}
for $w \in W_0$ and $\lambda \in X_*(\BT)$.

 The Weyl group $W_0$ is a finite Coxeter 
group with set of involutive generators $S_0=\{w_\alpha~|~\alpha \in \Delta\}$, where $w_\alpha$ simply denotes 
the reflection through the vanishing hyperplane of the root $\alpha \in \Phi$. Now decompose 
$\Phi=\Phi_1\cup...\cup\Phi_m$ into irreducible root systems with corresponding simple systems $\Delta_i,~1\leq 
i \leq m$ (such that each $\Phi_i$ is the root system of an almost simple component of $\GG$. If we put 
$\alpha_{0,i}$ the (unique) highest root of $\Phi_i$ (with respect to the simple system $\Delta_i$) we can form 
a larger Coxeter group with generators $\tilde{S}:=S_0 \cup \{t_{\alpha_{0,i}^\vee}w_{\alpha_{0,i}}\}$ which is 
isomorphic to the affine Weyl group $W_{af}$ associated to $\Phi$, which we will now think of as a subgroup of 
$\tilde{W}$ via this isomorphism.

We make the canonical choice for an Iwahori subgroup of $G$ by letting $I$ equal the subgroup generated by 
$\UT(\CO_F)$, the groups $x_\alpha(\CO_F)=\underline{U_\alpha}(\CO_F)$ for $\alpha \in \Phi^+$ and the 
groups $x_\alpha(\pi \CO_F)$ for $\alpha \in \Phi^- =-\Phi^+$. This is the canonical choice for $I$ since it is just 
the inverse image of the Borel subgroup associated to $\Phi^+$ under the reduction modulo $\pi$ from $K$ to the 
special fiber of $\underline{G}$ (cf. \cite{TitsReductiveGroups}). Under these definitions the triple 
$(\GG(F),I,\mathbb{N}_\BT(F))$ is a generalized Tits system.

The group $\tilde{W}$ is an extension of $W_{a}$ by a group $\Omega$ which one can find as follows: Put $\tilde{I}$ the normalizer of $I$ in $\GG(F)$ then
\begin{equation}
 \Omega=(\mathbb{N}_\BT(F) \cap \tilde{I})/\underline{T}(\CO_F) \subset \tilde{W}.
\end{equation}
The group $\Omega$ is finite Abelian and canonically isomorphic to $X_*(\BT)/\Lambda$, where $\Lambda \leq_\MZ 
X_*(\BT)$ is the lattice generated by the coroots $\Phi^\vee$ (in 
particular $\Omega$ is trivial if $\GG$ is simply connected). It normalizes $W_{af}$ and we get a split 
extension
\begin{equation}
 \tilde{W} \cong W_{af} \rtimes \Omega.
\end{equation}
As usual we can consider the length function $w \mapsto \ell(w)$ on the Coxeter group $W_{af}$ (with respect to the generating system $\tilde{S}$). This length function extends to $\tilde{W}$ by setting $\ell(\rho w):=\ell(w\rho):=\ell(w)$ for $w \in W_{af}$ and $\rho \in \Omega$. In this sense we will call an expression $w=w_1...w_r\rho$ with $w_i \in \tilde{S},~1\leq i \leq r$, and $\rho \in \Omega$ reduced if $\ell(w)=r$.
\subsection{Double cosets}
Let $W_1$ be a subgroup of $W_{af}$ with $W_1=\langle S \rangle$ where $S=W_1 \cap \tilde{S}$. Remember that such a subgroup is called a special (or standard parabolic) subgroup and is a Coxeter group in its own right where the length function of $W_1$ is just the restriction of the length function of $W_{af}$ to $W_1$. If $W_2 \leq W_1$ is another special subgroup (generated by $S'=W_2 \cap \tilde{S}$) we set
\begin{equation}
 [W_1 / W_2]:=\left\{ w \in W_1~:~\ell(ww')=\ell(w)+\ell(w') \text{ for all } w' \in W_2\right\}.
\end{equation}
Note that the elements of $[W_1 / W_2]$ are just the representatives of $W_1 / W_2$ of minimal length (cf. \cite[\textsection 2.5]{CarterSimpleGroupsLieType}).

For the remainder of this section we will consider two special subgroups $W_1,W_2$ of $W_{af}$ with respective 
sets of generators $S_1$ and $S_2$ and intersection $W_{1,2}:=W_1 \cap W_2$ (another special subgroup generated 
by $S_1 \cap S_2$). Furthermore define for $\sigma \in \tilde{W}$ the group $W_1^{\sigma W_2}$ as $W_1 \cap 
\sigma W_2 \sigma^{-1}$ which is just the stabilizer of the coset $\sigma W_2$ in $W_1$. Finally we will choose a 
system $[W_1\backslash \tilde{W} / W_2]\subset \tilde{W}$ of representatives for  $W_1\backslash \tilde{W} / 
W_2$ of minimal length, i.e. each $\sigma \in [W_1\backslash \tilde{W} / W_2]$ is of minimal length in 
$W_1\sigma W_2$ (a priori there is no reason to assume that these are unique). 

One of the main results of \cite{LanskyDecomposition} is the fact that for $\sigma \in [W_1\backslash \tilde{W} 
/ W_2]$, the group $W_1^{\sigma W_1}$ is a special subgroup of $W_1$. Here we want to present a slight generalization of this fact.

We start by restating the results from \cite{LanskyDecomposition} the first being the following analogue of the deletion condition for Coxeter groups.
\begin{Proposition}[\protect{\cite[Prop. 4.2]{LanskyDecomposition}}]
 Let $w \in \tilde{W}$ with reduced expression $w=\rho s_1...s_r$ (so $\rho \in \Omega$, $s_i \in \tilde{S}$ and $r=\ell(w)$). Then for all $s \in \tilde{S}$ exactly one of the following holds:
\begin{enumerate}
 \item $\ell(ws)=\ell(w)+1$.
 \item There is an $1 \leq i \leq r$ such that $w=\rho s_1...\hat{s_i}...s_rs$ (in which case $\ell(ws)=\ell(w)-1$).
\end{enumerate}
\end{Proposition}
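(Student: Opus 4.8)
The plan is to reduce the statement to the ordinary exchange (deletion) condition for the genuine Coxeter group $W_{af}$ by absorbing the length-zero factor $\rho$. The key observation is that all the letters $s_1,\dots,s_r$ and the test generator $s$ lie in $\tilde{S} \subset W_{af}$, whereas the only element outside $W_{af}$ is $\rho \in \Omega$. Since the length function is $\Omega$-invariant by its very definition, $\ell(\rho w) = \ell(w)$ for $w \in W_{af}$, right multiplication by a generator cannot interact with $\rho$, and the entire question is therefore governed by the affine Weyl group alone.

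Concretely, I would set $u := s_1 \cdots s_r \in W_{af}$, so that $w = \rho u$. Because $\ell(w) = r$ and $\ell(\rho u) = \ell(u)$, we obtain $\ell(u) = r$, which says precisely that $s_1 \cdots s_r$ is a \emph{reduced} expression for $u$ in the Coxeter group $(W_{af}, \tilde{S})$. Next, for $s \in \tilde{S}$ I would note that $us \in W_{af}$ and $ws = \rho (us)$, whence $\ell(ws) = \ell(us)$ again by $\Omega$-invariance. In this way both the comparison of $\ell(w)$ with $\ell(ws)$ and the deletion of a letter are transported verbatim from $w$ to $u$.

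Now I would invoke the standard theory of Coxeter groups for $u$ and $s$. Multiplication of a group element by a single generator changes the length by exactly $\pm 1$, so $\ell(us) \in \{r+1,\, r-1\}$, and since $\ell(ws) = \ell(us)$ this already delivers the dichotomy. If $\ell(us) = r+1$ we are in case (1). If instead $\ell(us) = r-1$, the exchange condition applied to the reduced word $u = s_1 \cdots s_r$ furnishes an index $1 \le i \le r$ with $us = s_1 \cdots \hat{s_i} \cdots s_r$; multiplying on the right by $s$ and using $s^2 = 1$ yields $u = s_1 \cdots \hat{s_i} \cdots s_r\, s$, hence $w = \rho u = \rho s_1 \cdots \hat{s_i} \cdots s_r\, s$, which is exactly case (2), with $\ell(ws) = r-1 = \ell(w)-1$. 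Mutual exclusivity is automatic: case (1) forces $\ell(ws) = r+1$, whereas the existence of a deletion index in case (2) forces $us = s_1 \cdots \hat{s_i} \cdots s_r$ to be a product of $r-1$ generators, hence $\ell(ws) = r-1$, so the two cannot hold together.

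The only content beyond the classical Coxeter statement is the bookkeeping with $\Omega$, and the single point I expect to state carefully is that $\rho$ is carried along untouched: because $s \in W_{af}$ and $\Omega$ merely normalizes $W_{af}$, forming $ws$ never requires rewriting $\rho$, so the $\Omega$-invariance of $\ell$ applies cleanly to both $w$ and $ws$. Once this is pinned down, the proposition is an immediate corollary of the exchange condition in $W_{af}$, and there is no genuine obstacle.
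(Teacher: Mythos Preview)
The paper does not supply its own proof of this proposition: it is quoted verbatim from \cite[Prop.~4.2]{LanskyDecomposition} and used as a black box. Your argument is correct and is exactly the expected one---factor out the length-zero element $\rho\in\Omega$, use the $\Omega$-invariance of $\ell$ to reduce to the element $u=s_1\cdots s_r\in W_{af}$, and then invoke the classical exchange condition for the Coxeter system $(W_{af},\tilde S)$; there is nothing to compare against here.
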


\begin{Lemma}[\protect{\cite[Lemma 4.3]{LanskyDecomposition}}]\label{Lansky4.3}
 Let $w,w' \in \tilde{W}$ with $\ell(w'w)=\ell(w')+\ell(w)$ and $s \in \tilde{S}$ with $\ell(ws)=\ell(w)+1$. Then exactly one of the following holds:
\begin{enumerate}
 \item $\ell(w'ws)=\ell(w'w)+1$.
 \item $w'w=\hat{w}'ws$ for some $\hat{w}' \in \tilde{W}$ with $\ell(\hat{w}')<\ell(w')$.
\end{enumerate}
Furthermore if $w'=\rho s_1...s_r$ is a minimal expression then $\hat{w}'=\rho s_1...\hat{s_i}...s_r$ for some $1\leq i \leq r$. This implies that if $w'$ is an element of some special subgroup of $\tilde{W}$, then $\hat{w}'$ is an element of that subgroup, too.
\end{Lemma}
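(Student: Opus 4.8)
The plan is to deduce everything from the deletion condition (Proposition 4.2) applied to the product $w'w$, using the hypothesis $\ell(w'w)=\ell(w')+\ell(w)$ to pin down \emph{where} the deletion occurs. First I would fix reduced expressions $w'=\rho s_1\cdots s_r$ (with $\rho\in\Omega$, $r=\ell(w')$) and $w=\sigma u_1\cdots u_q$ (with $\sigma\in\Omega$, $q=\ell(w)$). Since $\Omega$ normalizes $W_{af}$ and permutes $\tilde S$, I can push $\sigma$ to the left past the $s_i$, replacing each $s_i$ by $s_i':=\sigma^{-1}s_i\sigma\in\tilde S$, so that
\[
 w'w=(\rho\sigma)\,s_1'\cdots s_r'\,u_1\cdots u_q .
\]
Because $\ell(w'w)=r+q$, this word of length $r+q$ is a \emph{reduced} expression for $w'w$ of exactly the shape required by Proposition 4.2, and its first $r$ letters encode $w'$ while its last $q$ letters encode $w$.

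The dichotomy itself is the standard fact that right multiplication by $s\in\tilde S$ changes the length by exactly $\pm1$ (the $\Omega$-part is irrelevant to $\ell$). If $\ell(w'ws)=\ell(w'w)+1$ we are in case 1, so assume instead $\ell(w'ws)=\ell(w'w)-1$. Then Proposition 4.2 furnishes an index $i$ with $w'ws$ equal to the reduced word above with its $i$-th letter deleted. The key step is to rule out $i>r$: if the deleted letter lay in the $w$-part, then cancelling the common left factor $w'$ would give $ws=\sigma u_1\cdots\hat{u_k}\cdots u_q$ for some $k$, an element of length at most $q-1$, contradicting $\ell(ws)=q+1$. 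Hence $i\le r$, and undoing the conjugation by $\sigma$ shows $w'ws=\hat{w}'w$ with $\hat{w}'=\rho s_1\cdots\hat{s_i}\cdots s_r$; since $s$ is an involution this rearranges to $w'w=\hat{w}'ws$, and $\ell(\hat{w}')\le r-1<\ell(w')$. That is precisely case 2.

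This simultaneously proves the ``furthermore'' clause, since $\hat{w}'$ is literally the chosen reduced expression of $w'$ with one letter removed. The special-subgroup consequence then follows from the standard Coxeter fact that every reduced expression of an element of a special subgroup $W_1=\langle S_1\rangle$ uses only generators from $S_1$ (the length function of $W_1$ being the restriction of that of $W_{af}$): deleting a letter cannot introduce a generator outside $S_1$, so $\hat{w}'\in W_1$. Finally, the two cases are mutually exclusive because case 2 forces $\ell(w'ws)=\ell(\hat{w}'w)\le\ell(\hat{w}')+\ell(w)<\ell(w')+\ell(w)=\ell(w'w)$, which is incompatible with case 1.

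I expect the only real friction to be the bookkeeping with $\Omega$: one must carry the two $\Omega$-elements through the reduced expression by conjugation so that the deletion index returned by Proposition 4.2 can be cleanly classified as lying in the $w'$-factor or the $w$-factor. Once the expression is normalized this way, the heart of the argument --- that a deletion in the $w$-factor would shorten $ws$ and contradict $\ell(ws)=\ell(w)+1$ --- is immediate.
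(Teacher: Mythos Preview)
The paper does not actually prove this lemma: it is quoted verbatim from \cite[Lemma~4.3]{LanskyDecomposition} with a bare citation and no argument, so there is no ``paper's own proof'' to compare against. Your proposal is a correct, self-contained proof of the statement, and the strategy --- normalize $w'w$ into the form $(\rho\sigma)s_1'\cdots s_r'u_1\cdots u_q$ required by Proposition~4.2, then argue that any deletion forced by $\ell(w'ws)=\ell(w'w)-1$ must occur among the first $r$ letters because a deletion among the last $q$ would give $\ell(ws)\le q-1$ --- is exactly the natural one and is essentially how the result is proved in Lansky's paper as well. The bookkeeping with $\Omega$ is handled correctly: conjugating the $s_i$ by $\sigma$ to push it left, and then undoing that conjugation after the deletion, recovers $\hat{w}'=\rho s_1\cdots\hat{s_i}\cdots s_r$ in the original generators, which is what makes the special-subgroup conclusion go through.
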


\begin{Proposition}[\protect{\cite[Prop. 4.5]{LanskyDecomposition}}]
For $\sigma \in [W_1\backslash \tilde{W} / W_2]$ the group
\begin{equation}
W_1^{\sigma W_2}=W_1 \cap {^\sigma W_2}
\end{equation}
 is a special subgroup of $W_1$.
\end{Proposition}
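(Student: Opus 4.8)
The plan is to show that $H:=W_1^{\sigma W_2}=W_1\cap \sigma W_2\sigma^{-1}$ coincides with the special subgroup $\langle S_H\rangle$ of $W_1$ generated by $S_H:=\{s\in S_1\,:\,\sigma^{-1}s\sigma\in W_2\}=H\cap\tilde{S}$. The inclusion $\langle S_H\rangle\subseteq H$ is immediate, since $H$ is a group and each $s\in S_H$ lies in $W_1\cap\sigma W_2\sigma^{-1}$; the whole content is the reverse inclusion, which I would establish by induction on the length $\ell(w)$ of an element $w\in H$, the case $w=1$ being trivial.

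Before the induction I would record the length identities forced by minimality of $\sigma$. Being of minimal length in $W_1\sigma W_2$, $\sigma$ is in particular of minimal length in the left coset $W_1\sigma$ and in the right coset $\sigma W_2$, and from this I would deduce the additive identities $\ell(w_1\sigma)=\ell(w_1)+\ell(\sigma)$ for all $w_1\in W_1$ and $\ell(\sigma w_2)=\ell(\sigma)+\ell(w_2)$ for all $w_2\in W_2$. These are the extended-affine analogues of the standard statement about minimal coset representatives, and I would prove them by induction on $\ell(w_1)$ (resp. $\ell(w_2)$), Lemma \ref{Lansky4.3} being exactly the tool that controls how multiplication interacts with an already length-additive product. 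A first consequence is that for $w\in H$, writing $v:=\sigma^{-1}w\sigma\in W_2$ and comparing $\ell(w\sigma)=\ell(w)+\ell(\sigma)$ with $\ell(\sigma v)=\ell(\sigma)+\ell(v)$ through the identity $w\sigma=\sigma v$, one obtains $\ell(v)=\ell(w)$.

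For the inductive step take $w\in H$ with $w\neq 1$ and choose any left descent $s\in S_1$, i.e. $\ell(sw)=\ell(w)-1$ (such $s$ exists as $w\in W_1\setminus\{1\}$). The key claim is that $s$ is then forced into $S_H$. I would argue via $x:=w\sigma=\sigma v$: on one hand $\ell(sx)=\ell(sw)+\ell(\sigma)=\ell(x)-1$, so $s$ is a left descent of $x$; on the other hand, since $\ell(\sigma v)=\ell(\sigma)+\ell(v)$, concatenating a reduced word for $\sigma$ with a reduced word for $v$ in $S_2$ gives a reduced word for $x$. Applying the deletion condition recalled above (in its left-handed form, obtained by passing to inverses), left multiplication by $s$ deletes exactly one letter of this word. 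The deleted letter cannot lie in the $\sigma$-block, for a deletion there would give $\ell(s\sigma)=\ell(\sigma)-1$, contradicting $\ell(s\sigma)=\ell(\sigma)+1$. Hence it lies in the $v$-block, so $sx=\sigma v''$ with $v''\in W_2$. Then $\sigma^{-1}(sw)\sigma=v''\in W_2$ shows $sw\in H$, while $\sigma^{-1}s\sigma=v''v^{-1}\in W_2$ shows $s\in S_H$. As $\ell(sw)<\ell(w)$, induction gives $sw\in\langle S_H\rangle$, whence $w=s\,(sw)\in\langle S_H\rangle$, completing the argument.

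The genuinely delicate points, and where I expect the real work to be, are not in this combinatorial core but in the passage from the Coxeter group $W_{af}$ to the extended group $\tilde{W}\cong W_{af}\rtimes\Omega$. First, the length-additivity identities for minimal coset representatives must be re-proved in $\tilde{W}$, which is precisely the role of Lemma \ref{Lansky4.3} together with the extended deletion condition. Second, when $\sigma$ carries a nontrivial $\Omega$-component $\rho$, the deletion argument has to be run with $s$ replaced by $\rho^{-1}s\rho\in\tilde{S}$ — using that $\Omega$ permutes $\tilde{S}$ — so that one really applies the deletion condition inside $W_{af}$ while the length-zero factor $\rho$ is carried along untouched. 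Once these two bookkeeping issues are settled, the localization of the deletion into the $v$-block, and hence the forcing of $s$ into $S_H$, goes through unchanged.
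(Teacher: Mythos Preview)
The paper does not actually prove this proposition; it is quoted verbatim from \cite[Prop.~4.5]{LanskyDecomposition} and used as a black box, so there is no proof in the paper to compare against.

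That said, your argument is correct and is essentially the standard one (and, as far as I can tell, the one Lansky gives). The only point worth tightening is the appeal to ``the deletion condition in its left-handed form'': rather than passing to inverses and tracking the $\Omega$-component through conjugation, it is cleaner to invoke the left-handed analogue of Lemma~\ref{Lansky4.3} directly. With $w=\sigma$, $w'=v\in W_2$ and $s\in S_1$, one has $\ell(\sigma v)=\ell(\sigma)+\ell(v)$ and $\ell(s\sigma)=\ell(\sigma)+1$ by minimality of $\sigma$; the lemma then forces either $\ell(s\sigma v)=\ell(\sigma v)+1$ (excluded, since you computed $\ell(sx)=\ell(x)-1$) or $\sigma v=s\sigma\hat{v}$ with $\hat v\in W_2$ and $\ell(\hat v)<\ell(v)$. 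The second alternative immediately gives $\sigma^{-1}s\sigma=\hat v v^{-1}\in W_2$ and $\sigma^{-1}(sw)\sigma=\hat v\in W_2$, exactly what you need, and the ``furthermore'' clause of Lemma~\ref{Lansky4.3} already guarantees $\hat v$ stays in $W_2$ without any separate bookkeeping for the $\Omega$-part of $\sigma$. This also makes the localization of the deletion into the $v$-block automatic rather than something to be argued by hand.
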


In particular this means that the expression $[ W_1 / W_1^{\sigma W_2}]$ is well-defined and we have the following length additivity property:
\begin{Theorem}[\protect{\cite[Thm. 4.6]{LanskyDecomposition}}]\label{Lansky4.6}
 Fix two elements $\sigma \in [W_1\backslash \tilde{W} / W_2]$ and $\tau \in [ W_1 / W_1^{\sigma W_2}]$ then
 \begin{equation}
  \ell(\tau \sigma w)=\ell(\tau)+\ell(\sigma)+\ell(w)
 \end{equation}
for all $w \in W_2$.
\end{Theorem}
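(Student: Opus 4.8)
The plan is to prove the length-additivity by induction on $\ell(w)$ for $w \in W_2$, thereby reducing the whole statement to a single assertion: that right-multiplication by one generator $s \in S_2$ never shortens $\tau\sigma w$. Before starting the induction I would record two standard properties of the minimal double-coset representative $\sigma$, both immediate from its minimality in $W_1\sigma W_2$ together with the deletion condition (Proposition 4.2): for every $u \in W_1$ one has $\ell(u\sigma)=\ell(u)+\ell(\sigma)$ (since $\sigma$ is minimal in the left coset $W_1\sigma$), and for every $v \in W_2$ one has $\ell(\sigma v)=\ell(\sigma)+\ell(v)$ (since $\sigma$ is minimal in the right coset $\sigma W_2$). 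The first of these already settles the base case $w=e$, namely $\ell(\tau\sigma)=\ell(\tau)+\ell(\sigma)$, because $\tau \in W_1$.

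For the inductive step, fix $w\in W_2$ with $\ell(w)\geq 1$ and write $w=us$ with $s\in S_2\subseteq\tilde{S}$ and $\ell(u)=\ell(w)-1$, so that $\ell(us)=\ell(u)+1$. By the induction hypothesis $\ell(\tau\sigma u)=\ell(\tau)+\ell(\sigma)+\ell(u)$, and combining this with $\ell(\sigma u)=\ell(\sigma)+\ell(u)$ shows $\ell(\tau\cdot\sigma u)=\ell(\tau)+\ell(\sigma u)$; likewise $\ell((\sigma u)s)=\ell(\sigma u)+1$, using $\ell(\sigma w)=\ell(\sigma)+\ell(w)$. These are exactly the hypotheses required to apply Lemma \ref{Lansky4.3} with its element ``$w'$'' taken to be $\tau$ and its element ``$w$'' taken to be $\sigma u$. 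The lemma then yields one of two alternatives: either $\ell(\tau\sigma u\,s)=\ell(\tau\sigma u)+1$, which is precisely the desired conclusion $\ell(\tau\sigma w)=\ell(\tau)+\ell(\sigma)+\ell(w)$, or else $\tau\sigma u=\hat{\tau}\,\sigma u s=\hat{\tau}\sigma w$ for some $\hat{\tau}$ with $\ell(\hat{\tau})<\ell(\tau)$.

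The heart of the argument, and the step I expect to be the main obstacle, is excluding this second alternative. By the refinement in Lemma \ref{Lansky4.3}, $\hat{\tau}$ arises from a reduced expression for $\tau$ by a single deletion, hence lies in every special subgroup containing $\tau$; in particular $\hat{\tau}\in W_1$. From $\tau\sigma u=\hat{\tau}\sigma u s$ I would cancel $\sigma u$ on the right to obtain $\hat{\tau}^{-1}\tau=\sigma(usu^{-1})\sigma^{-1}$. Since $usu^{-1}\in W_2$, the element $\hat{\tau}^{-1}\tau$ lies in $\sigma W_2\sigma^{-1}$, and since $\hat{\tau},\tau\in W_1$ it also lies in $W_1$; therefore $\hat{\tau}^{-1}\tau\in W_1\cap {}^{\sigma}W_2=W_1^{\sigma W_2}$, which is a genuine special subgroup of $W_1$ by Proposition 4.5. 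Consequently $\hat{\tau}$ and $\tau$ represent the same coset in $W_1/W_1^{\sigma W_2}$, yet $\ell(\hat{\tau})<\ell(\tau)$, contradicting the defining minimality of $\tau\in[W_1/W_1^{\sigma W_2}]$. This rules out the second alternative and closes the induction. The delicate points are the bookkeeping that keeps the hypotheses of Lemma \ref{Lansky4.3} valid at each stage, and the recognition that the cancellation produces exactly an element of the stabilizer $W_1^{\sigma W_2}=W_1\cap {}^{\sigma}W_2$, which is what lets the minimality of $\tau$ be invoked.
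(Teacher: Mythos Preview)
The paper does not contain its own proof of this statement; it is quoted verbatim as \cite[Thm.~4.6]{LanskyDecomposition} and used as input for the subsequent corollaries. There is therefore nothing in the present paper to compare your argument against.

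That said, your proof is correct and is essentially the argument one finds in Lansky's paper. The two preliminary length-additivity facts for $\sigma$ (with respect to $W_1$ on the left and $W_2$ on the right) are exactly the standard properties of a minimal double-coset representative, and your application of Lemma~\ref{Lansky4.3} with $w'=\tau$ and $w=\sigma u$ is the intended one. The exclusion of the second alternative via $\hat{\tau}^{-1}\tau=\sigma(usu^{-1})\sigma^{-1}\in W_1\cap{}^{\sigma}W_2=W_1^{\sigma W_2}$, followed by the contradiction with the minimality of $\tau$ in $[W_1/W_1^{\sigma W_2}]$, is precisely the crux of the original proof. One small remark: strictly speaking Proposition~4.5 (that $W_1^{\sigma W_2}$ is special) is not needed for this argument, since the definition of $[W_1/W_1^{\sigma W_2}]$ as minimal-length coset representatives makes sense regardless and is all you use; the proposition becomes relevant only for the later consequences such as Corollary~\ref{Lansky4.7}.
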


\begin{Corollary}[\protect{\cite[Cor. 4.7]{LanskyDecomposition}}]\label{Lansky4.7}
 Let $\sigma \in [W_1\backslash \tilde{W} / W_2]$ then $\sigma$ is the unique element of minimal length in $W_1\sigma W_2$.
\end{Corollary}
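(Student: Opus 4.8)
The plan is to deduce the statement directly from the length additivity of Theorem \ref{Lansky4.6} by rewriting an arbitrary element of the double coset $W_1 \sigma W_2$ in the normal form to which that theorem applies. So I take an arbitrary element $u \sigma w$ with $u \in W_1$ and $w \in W_2$, and aim to show $\ell(u\sigma w) \geq \ell(\sigma)$ with equality forcing $u \sigma w = \sigma$.

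First I would decompose $u$ along the subgroup $W_1^{\sigma W_2}$, which is special in $W_1$ by \cite[Prop. 4.5]{LanskyDecomposition}. Since the elements of $[W_1/W_1^{\sigma W_2}]$ form a set of minimal-length representatives for the left cosets $W_1/W_1^{\sigma W_2}$, I can write $u = \tau v$ with $\tau \in [W_1/W_1^{\sigma W_2}]$ and $v \in W_1^{\sigma W_2}$. The crucial observation is that because $v \in W_1^{\sigma W_2} = W_1 \cap {}^{\sigma}W_2 = W_1 \cap \sigma W_2 \sigma^{-1}$, one can write $v = \sigma w' \sigma^{-1}$ for some $w' \in W_2$, so that $v\sigma = \sigma w'$ and hence
\[
u \sigma w = \tau v \sigma w = \tau \sigma (w' w),
\]
with $w' w \in W_2$. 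At this point Theorem \ref{Lansky4.6} applies verbatim (with the element $w'w \in W_2$ in place of $w$) and yields
\[
\ell(u \sigma w) = \ell(\tau) + \ell(\sigma) + \ell(w' w) \geq \ell(\sigma).
\]
This already shows that $\sigma$ has minimal length in $W_1 \sigma W_2$. For uniqueness I would note that equality forces $\ell(\tau) = 0$ and $\ell(w' w) = 0$; since $\tau \in W_1$ and $w'w \in W_2$ both lie in the Coxeter group $W_{af}$, where the identity is the only element of length zero, this gives $\tau = 1$ and $w'w = 1$, whence $u \sigma w = \sigma$.

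I do not expect a serious obstacle here: once Theorem \ref{Lansky4.6} is available the argument is essentially bookkeeping. The one point deserving care is the opening reduction, namely recognizing that conjugating the $W_1^{\sigma W_2}$-component $v$ of $u$ across $\sigma$ converts it into a genuine $W_2$-factor $w'$, which is exactly what allows Theorem \ref{Lansky4.6} to absorb the whole $W_1$-part into the single minimal representative $\tau$. I would also make sure to invoke that length-zero elements of $W_1$ and $W_2$ are trivial, which is legitimate precisely because these are special subgroups of the Coxeter group $W_{af}$ and not of the larger $\tilde{W}$, where the elements of $\Omega$ have length zero without being trivial.
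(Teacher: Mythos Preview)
Your argument is correct. The paper itself does not supply a proof of this corollary; it simply cites it from \cite{LanskyDecomposition}, so there is nothing to compare against here beyond noting that your derivation from Theorem~\ref{Lansky4.6} is exactly the intended one: rewrite $u\sigma w$ as $\tau\sigma(w'w)$ using the special subgroup $W_1^{\sigma W_2}$, then read off both minimality and uniqueness from the length additivity.
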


We now want to study what happens if we consider double cosets with respect to the intersection $W_{1,2}$ and how these correspond to the double cosets with respect to $W_1$ or $W_2$. To that end let $W_1'=W_1\Omega_1$ with $\Omega_1 \leq \Omega$ fixing $S_1$. Then $W_{1,2}=W_1 \cap W_2 = W_1' \cap W_2$. 

\begin{Lemma}\label{CosetSwitch}
  We have $\ell(\sigma w_2)=\ell(w_2 \sigma)=\ell(w_2)+\ell(\sigma)$ for all $w_2 \in W_2$ and all $\sigma \in [W_{1,2}\backslash W_1' / W_{1,2}]$, where $[W_{1,2}\backslash W_1' / W_{1,2}]=[W_{1,2}\backslash \tilde{W} / W_{1,2}] \cap W_1'$ corresponds to the double cosets with respect to $W_{1,2}$ that are contained in $W_1'$.
\end{Lemma}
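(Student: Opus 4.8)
The plan is to prove the right-hand identity $\ell(\sigma w_2)=\ell(\sigma)+\ell(w_2)$ by induction on $\ell(w_2)$, and then to deduce the left-hand identity $\ell(w_2\sigma)=\ell(\sigma)+\ell(w_2)$ by applying the right-hand one to the inverses $\sigma^{-1}$ and $w_2^{-1}$. This reduction is legitimate because inversion preserves length, fixes $W_{1,2}$ setwise, and preserves $W_1'$ (which is a group), so $\sigma^{-1}$ is again the unique minimal-length representative of its $W_{1,2}$-double coset and still lies in $W_1'$. Throughout I will use that $\sigma\in[W_{1,2}\backslash\tilde{W}/W_{1,2}]\cap W_1'$ is, by Corollary \ref{Lansky4.7}, the unique element of minimal length in the double coset $W_{1,2}\sigma W_{1,2}$.

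For the induction the base case $\ell(w_2)=0$ is immediate. For the step I fix $w_2\in W_2$ with $\ell(w_2)=n\geq 1$ and write $w_2=w_2's$ with $s\in S_2\subseteq\tilde{S}$, $\ell(w_2')=n-1$ and $\ell(w_2's)=\ell(w_2')+1$. The inductive hypothesis gives $\ell(\sigma w_2')=\ell(\sigma)+\ell(w_2')$, which is exactly the hypothesis needed to invoke Lemma \ref{Lansky4.3} with $w'=\sigma$, $w=w_2'$ and this $s$. The lemma then forces one of two alternatives: either $\ell(\sigma w_2)=\ell(\sigma w_2')+1=\ell(\sigma)+n$, which is the desired equality, or there exists $\hat w'$ with $\ell(\hat w')<\ell(\sigma)$ and $\sigma w_2'=\hat w' w_2's=\hat w' w_2$.

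The heart of the argument, and the step I expect to be the main obstacle, is excluding this second alternative using the hypothesis $\sigma\in W_1'$. Here I exploit the explicit deletion description in Lemma \ref{Lansky4.3}: since $\sigma\in W_1'=W_1\rtimes\Omega_1$ with $\Omega_1$ stabilizing $S_1$, the element $\sigma$ admits a reduced expression $\sigma=\rho_1 s_1\cdots s_r$ with $\rho_1\in\Omega_1$ and $s_1,\dots,s_r\in S_1$, and the lemma produces $\hat w'=\rho_1 s_1\cdots\hat{s_i}\cdots s_r$ for some $i$, so that $\hat w'\in\rho_1 W_1=W_1'$. From $\sigma w_2'=\hat w' w_2$ I obtain $\hat w'^{-1}\sigma=w_2(w_2')^{-1}\in W_2$, while $\hat w'^{-1}\sigma\in W_1'$ because $W_1'$ is a group; hence $\hat w'^{-1}\sigma\in W_1'\cap W_2=W_{1,2}$ by the identity recorded just before the lemma. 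Therefore $\hat w'\in\sigma W_{1,2}\subseteq W_{1,2}\sigma W_{1,2}$ with $\ell(\hat w')<\ell(\sigma)$, contradicting the minimality of $\sigma$ in its double coset. This forces the first alternative and completes the induction.

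Finally, the identification $[W_{1,2}\backslash W_1'/W_{1,2}]=[W_{1,2}\backslash\tilde{W}/W_{1,2}]\cap W_1'$ is justified by observing that $W_{1,2}\subseteq W_1'$ and that $W_1'$ is a group, so $W_1'$ is a union of $W_{1,2}$-double cosets and the minimal-length representative of any such coset contained in $W_1'$ already lies in $W_1'$. The only genuinely delicate point is the bookkeeping in the previous paragraph, namely bringing the reduced expression of $\sigma$ into the shape $\rho_1\cdot(\text{word in }S_1)$ so that the deleted word $\hat w'$ provably remains in $W_1'$; this is precisely where the two standing assumptions, that $\Omega_1$ stabilizes $S_1$ and that $W_{1,2}=W_1'\cap W_2$, are both essential.
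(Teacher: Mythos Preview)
Your proof is correct and follows essentially the same approach as the paper: induction on $\ell(w_2)$, invoking Lemma~\ref{Lansky4.3} in the inductive step, and ruling out the second alternative by showing that the deleted element $\hat w'$ lies in $W_1'$ and hence $\hat w'^{-1}\sigma\in W_1'\cap W_2=W_{1,2}$, contradicting minimality of $\sigma$. Your treatment is in fact slightly more careful than the paper's in justifying $\hat w'\in W_1'$ via the explicit reduced expression $\sigma=\rho_1 s_1\cdots s_r$ with $\rho_1\in\Omega_1$ and $s_i\in S_1$, and your reduction of the left-hand identity to the right-hand one via inversion is a clean alternative to the paper's ``analogously''.
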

\begin{proof}
 We will show this by induction on $\ell(w_2)$ where the assertion is trivial for $\ell(w_2)=0$. Let $w_2=w_2's$ with $s \in S_2$.
Then $\ell(\sigma w_2')=\ell(\sigma)+\ell(w_2')$ by induction and via Lemma \ref{Lansky4.3} we have either $\ell(\sigma w_2's)=\ell(\sigma w_2')+1$ (which we want to show) or $\sigma w_2' = \sigma' w_2's$ with $\ell(\sigma')<\ell(\sigma)$ and $\sigma' \in W_1'$. But in the latter case we have $\sigma=\sigma'w_2'sw_2'^{-1}$ and $w_2'sw_2'^{-1}=\sigma'^{-1}\sigma \in W_2 \cap W_1'$ which contradicts the assumption that $\sigma$ is of minimal length in its double coset with respect to $W_{1,2}$.

The other equality follows analogously.
\end{proof}

Using this length additivity we can show that the representatives of minimal length with respect to $W_{1,2}$ are already of minimal length with respect to the larger group $W_2$ as long as they are contained in $W_1'$.
\begin{Lemma}\label{DoubleCosetEmbedding}
 The following holds: $\sigma \in [W_{1,2}\backslash W_1' / W_{1,2}]$ already implies $\sigma \in [W_{2}\backslash \tilde{W} / W_{2}]$.
\end{Lemma}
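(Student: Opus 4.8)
The plan is to reduce the statement to a descent-condition check and then read that condition straight off Lemma~\ref{CosetSwitch}. By definition $\sigma$ lies in $[W_{2}\backslash \tilde{W} / W_{2}]$ exactly when $\sigma$ has minimal length in its double coset $W_2\sigma W_2$, so everything reduces to verifying this minimality. The natural tool is the two-sided reducedness criterion for Coxeter double cosets: an element is of minimal length in $W_2\sigma W_2$ if and only if $\ell(s\sigma)>\ell(\sigma)$ and $\ell(\sigma s)>\ell(\sigma)$ for every generator $s\in S_2$. First I would record that both families of inequalities are immediate from Lemma~\ref{CosetSwitch}: specializing $w_2=s\in S_2$ (so $\ell(w_2)=1$) gives $\ell(s\sigma)=\ell(\sigma s)=\ell(\sigma)+1>\ell(\sigma)$. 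Thus $\sigma$ is simultaneously left- and right-reduced with respect to $S_2$.

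The genuinely nontrivial point, and the step I expect to be the main obstacle, is the implication \emph{two-sided $S_2$-reduced $\Rightarrow$ minimal in $W_2\sigma W_2$}, because the naive estimate $\ell(w_2\sigma w_2')\ge\ell(\sigma w_2')-\ell(w_2)$ is far too weak to yield $\ell(w_2\sigma w_2')\ge\ell(\sigma)$. Here I would argue by induction on $\ell(w_2)+\ell(w_2')$ that $\ell(w_2\sigma w_2')\ge\ell(\sigma)$ for all $w_2,w_2'\in W_2$, peeling off one generator at a time and invoking the deletion condition of \cite[Prop.~4.2]{LanskyDecomposition} together with Lemma~\ref{Lansky4.3} to control precisely the cases in which multiplication by a generator could shorten the word. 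The crux is that any such length drop, once the deleted letter is traced back through the reduced expression, would force a genuine $S_2$-descent of $\sigma$ on the relevant side, contradicting the reducedness established in the first step; this is exactly the mechanism already exploited in the proof of Lemma~\ref{CosetSwitch}.

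Since all of the exchange and deletion machinery has been carried over to the extended affine Weyl group $\tilde{W}\cong W_{af}\rtimes\Omega$ in the cited results, the only extra care beyond the ordinary Coxeter situation is bookkeeping for the length-zero factor $\Omega$, which is compatible with the length function by construction and so does not interfere with the descent analysis. Once minimality is in hand, $\sigma$ is the unique minimal-length element of $W_2\sigma W_2$ by (the general form of) Corollary~\ref{Lansky4.7}; as the system $[W_{2}\backslash\tilde{W}/W_{2}]$ is composed of exactly these minimal representatives, we conclude $\sigma\in[W_{2}\backslash\tilde{W}/W_{2}]$, which is the assertion.
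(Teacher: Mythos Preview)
Your overall strategy is sound and in fact very close to the paper's: both arguments amount to proving $\ell(w_2\sigma w_2')\ge\ell(\sigma)$ for all $w_2,w_2'\in W_2$ by induction, with Lemma~\ref{CosetSwitch} as the main input. The two proofs diverge, however, in how much of Lemma~\ref{CosetSwitch} is actually used, and this is where your sketch develops a gap.

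You extract from Lemma~\ref{CosetSwitch} only the descent conditions $\ell(s\sigma)>\ell(\sigma)$ and $\ell(\sigma s)>\ell(\sigma)$ for $s\in S_2$, and then propose to prove the (true and classical) implication ``two-sided $S_2$-reduced $\Rightarrow$ minimal in $W_2\sigma W_2$'' by inducting on $\ell(w_2)+\ell(w_2')$ and tracing a deleted letter back to an $S_2$-descent of $\sigma$. The difficulty is that once you peel off a generator, say $w_2=su$, the inductive hypothesis gives only $\ell(u\sigma w_2')\ge\ell(\sigma)$; you have no controlled reduced expression for $u\sigma w_2'$ in terms of reduced words for $u$, $\sigma$, and $w_2'$ separately, so the exchange/deletion step cannot be localised to ``the $\sigma$-part''. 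Lemma~\ref{Lansky4.3} does not apply either, since its hypothesis is precisely an additive length splitting that you have not established. The mechanism you cite from the proof of Lemma~\ref{CosetSwitch} worked there only because the one-sided additivity $\ell(\sigma w_2')=\ell(\sigma)+\ell(w_2')$ was already available by the (one-variable) induction.

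The paper avoids this by using the \emph{full} content of Lemma~\ref{CosetSwitch}, namely $\ell(\sigma w_2)=\ell(w_2\sigma)=\ell(\sigma)+\ell(w_2)$ for every $w_2\in W_2$, not just generators. It then inducts on $\min(\ell(w_2),\ell(w_2'))$: the base case $\min=0$ is exactly this one-sided additivity, and in the inductive step one peels a generator from the \emph{shorter} side, obtains $\ell(\tilde w_2\sigma w_2')=\ell(\sigma)$ in the bad case, and finishes with the estimate
\[
\ell(\sigma)=\ell(\tilde w_2\sigma w_2')\ge \ell(\sigma w_2')-\ell(\tilde w_2)=\ell(\sigma)+\ell(w_2')-\ell(\tilde w_2),
\]
forcing $\ell(w_2')<\ell(w_2)$ and contradicting the choice of the minimum. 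No deletion-tracing is needed. Your induction on the sum can also be made to work, but only after you feed in the full additivity on both sides; with merely the descent condition, the argument as you have sketched it does not close. Alternatively, you could simply cite the standard characterisation of minimal $(W_I,W_J)$-double coset representatives in a Coxeter group and reduce to $W_{af}$ by absorbing the $\Omega$-component, which would give a genuinely shorter route than the paper's.
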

\begin{proof}
 We need to show that $\ell(w_2\sigma w_2')\geq \ell(\sigma)$ for all $w_2,w_2' \in W_2$ and will do so by induction on $\mathrm{min}(\ell(w_2),\ell(w_2'))$. If this minimum is $0$ the assertion follows from the length additivity property in Lemma \ref{CosetSwitch} so let $\mathrm{min}(\ell(w_2),\ell(w_2'))>0$ be realized (without loss of generality) at $w_2$. Then we can write $w_2=s\tilde{w_2}$ with $s \in S_1$ and $\ell(\tilde{w_2})=\ell(w_2)-1$. Assume $\ell(w_2\sigma w_2')<\ell(\sigma)$ while (by induction) $\ell(\tilde{w_2}\sigma w_2')\geq \ell(\sigma)$. We see
\begin{equation}
 \ell(\sigma)>\ell(w_2\sigma w_2')=\ell(s\tilde{w_2}\sigma w_2')\geq \ell(\tilde{w_2} \sigma w_2')-1 \geq \ell(\sigma)-1
\end{equation}
and hence $\ell(\tilde{w_2}\sigma w_2')=\ell(\sigma)$. Thus
\begin{equation}
 \ell(\sigma)=\ell(\tilde{w_2}\sigma w_2')\geq \ell(\sigma w_2')-\ell(\tilde{w_2})=\ell(\sigma)+\ell(w_2')-\ell(\tilde{w_2})
\end{equation}
which implies $\ell(w_2') \leq \ell(\tilde{w_2})=\ell(w_2)-1$ but we had $\ell(w_2')\geq \ell(w_2)$ which is a contradiction. Hence $\ell(w_2\sigma w_2')\geq \ell(\sigma)$ which completes the proof.
\end{proof}

Using this lemma and Theorem \ref{Lansky4.6} we easily get the following result on the intersection of $W_2$ and ${^\sigma W_2}$ with $\sigma \in [W_{1,2}\backslash W_1' / W_{1,2}]$.
\begin{Corollary}\label{SpecialIntersection}
 In the same notation as above: For $\sigma \in [W_{1,2}\backslash W_1' / W_{1,2}]$ the group $W_2 \cap {^\sigma W_2}$ is a special subgroup of $W_2$.
\end{Corollary}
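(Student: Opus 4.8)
The plan is to reduce the statement to the special-intersection property already recorded in \cite[Prop. 4.5]{LanskyDecomposition}, applied with \emph{both} special subgroups taken to be $W_2$. The essential observation is that, although that proposition is phrased for the fixed pair $W_1,W_2$, its content is a general assertion about any two special subgroups of $W_{af}$: whenever $\tau \in [W_2\backslash \tilde{W} / W_2]$ is a minimal-length double coset representative, the stabilizer $W_2 \cap {^\tau W_2}$ of the coset $\tau W_2$ in $W_2$ is a special subgroup of $W_2$. So the whole task is to verify that our $\sigma$ satisfies the hypothesis of that proposition relative to $W_2$.

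First I would invoke Lemma \ref{DoubleCosetEmbedding}. By construction $\sigma \in [W_{1,2}\backslash W_1' / W_{1,2}]$, and the lemma tells us precisely that such a $\sigma$ is already a minimal-length double coset representative for the larger group $W_2$, i.e. $\sigma \in [W_2\backslash \tilde{W} / W_2]$. This is exactly the minimal-length condition under which the special-intersection result is available. Feeding this into \cite[Prop. 4.5]{LanskyDecomposition} with $W_1$ replaced by $W_2$ immediately yields that $W_2 \cap {^\sigma W_2}$ is a special subgroup of $W_2$, which is the assertion of the corollary.

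The point that deserves a moment's care—and which I would flag explicitly rather than treat as an obstacle—is that $\sigma$ ranges over $W_1' = W_1\Omega_1$ rather than over $W_{af}$, so it may carry a nontrivial $\Omega$-component. This causes no difficulty, since the double coset representatives in \cite[Prop. 4.5]{LanskyDecomposition} run over all of $\tilde{W}$ and the relevant length function is the one on $\tilde{W}$ obtained by declaring $\ell(\rho w)=\ell(w)$ for $\rho \in \Omega$. Thus the genuine work has already been carried out in Lemma \ref{DoubleCosetEmbedding} (and ultimately in the length-additivity of Lemma \ref{CosetSwitch}), and the corollary itself is a direct citation: the main ``obstacle'' is simply recognizing that the embedding of minimal-length representatives provided by Lemma \ref{DoubleCosetEmbedding} is exactly what converts the Lansky result, stated for a pair of subgroups, into the diagonal case $W_2=W_2$ needed here.
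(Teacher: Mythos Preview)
Your argument is correct and essentially follows the paper's own approach: the paper remarks that the corollary follows ``using this lemma and Theorem~\ref{Lansky4.6}'', i.e.\ from Lemma~\ref{DoubleCosetEmbedding} together with Lansky's results, and you make this explicit by first invoking Lemma~\ref{DoubleCosetEmbedding} to obtain $\sigma \in [W_2\backslash \tilde{W}/W_2]$ and then applying \cite[Prop.~4.5]{LanskyDecomposition} with both special subgroups equal to $W_2$. If anything, your citation of Prop.~4.5 is the more precise one for the conclusion that $W_2 \cap {^\sigma W_2}$ is special, and your remark that $\sigma$ may carry a nontrivial $\Omega$-component (which is harmless since Prop.~4.5 allows $\sigma \in \tilde{W}$) is a useful clarification the paper leaves implicit.
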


Not only are the elements of $[W_{1,2}\backslash W_1' / W_{1,2}]$ of shortest length in their $W_2-W_2-$double coset but no two of them define the same one.
 
\begin{Corollary}\label{CosetRepresentativesW121}
 For $\sigma,\sigma'\in [W_{1,2}\backslash W_1' / W_{1,2}]$ we have $W_2\sigma W_2=W_2 \sigma' W_2$ if and only if $\sigma = \sigma'$.  
\end{Corollary}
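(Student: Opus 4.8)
The plan is to reduce the claim to the uniqueness of minimal-length representatives of $W_2$-$W_2$-double cosets, which is already available from the Lansky machinery recalled above. The forward implication is immediate, so the entire content lies in showing that the hypothesis $W_2\sigma W_2 = W_2\sigma' W_2$ forces $\sigma = \sigma'$.

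First I would observe that, by Lemma \ref{DoubleCosetEmbedding}, every element of $[W_{1,2}\backslash W_1' / W_{1,2}]$ is already of minimal length in its own $W_2$-$W_2$-double coset; in particular this applies to both $\sigma$ and $\sigma'$. Hence, under the assumption $W_2\sigma W_2 = W_2\sigma' W_2$, the elements $\sigma$ and $\sigma'$ are two minimal-length elements of one and the same $W_2$-$W_2$-double coset.

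Next I would invoke the uniqueness statement. Corollary \ref{Lansky4.7}, applied with the role of $W_1$ played by $W_2$ (the cited result holds for an arbitrary pair of special subgroups), asserts that a $W_2$-$W_2$-double coset contains exactly one element of minimal length. Combined with the previous paragraph this yields $\sigma = \sigma'$, which is precisely the non-trivial direction.

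The one subtlety worth checking is the legitimacy of specializing Corollary \ref{Lansky4.7} to the symmetric pair $(W_2,W_2)$: one must confirm that the uniqueness of a minimal-length double-coset representative is genuinely a statement about an arbitrary pair of special subgroups of $W_{af}$ and does not implicitly rely on an asymmetry between the two subgroups in the standing hypotheses. Granting this, the corollary is a one-line consequence of Lemma \ref{DoubleCosetEmbedding} together with the uniqueness of shortest double-coset representatives, and no further length computations are needed.
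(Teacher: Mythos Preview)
Your proposal is correct and is essentially identical to the paper's own proof: the paper also derives the result in one line from Lemma~\ref{DoubleCosetEmbedding} together with Corollary~\ref{Lansky4.7}. The subtlety you flag is harmless, since Corollary~\ref{Lansky4.7} is stated for an arbitrary pair of special subgroups of $W_{af}$ and hence specializes without issue to the symmetric pair $(W_2,W_2)$.
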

\begin{proof}
 This follows from Lemma \ref{DoubleCosetEmbedding} together with the fact that the elements of $[W_{2}\backslash \tilde{W} / W_{2}]$ are the unique elements of shortest length in their respective double coset by Lemma \ref{Lansky4.7}.
\end{proof}

Note that it is not possible to generalize the last result to also include the case where $W_2$ contains a non-trivial subgroup of $\Omega$ as the following example shows.
\begin{Example}\label{CounterexampleDoubleCosets}
 Consider the extended affine Weyl group with Dynkin diagram
\begin{center}
\begin{tikzpicture}
    
    \draw (0,0) -- (2,0);
    \draw (0,0) -- (0,2);
    \draw (2,2) -- (2,0);
    \draw (2,2) -- (0,2);
    \draw[fill=white] (0,0) circle(.1);
    \draw[fill=white] (2,0) circle(.1);
    \draw[fill=white] (0,2) circle(.1);
    \draw[fill=white] (2,2) circle(.1);
    
    \node at (-1.35,2) {$\tilde{A}_3:$};
    \node at (-0.35,-0.35) {$1$};
    \node at (-0.35,2.35) {$0$};
    \node at (2.35,2.35) {$3$};
    \node at (2.35,-0.35) {$2$};

\end{tikzpicture} 
\end{center}
and finite group $\Omega=\langle \omega \rangle \cong{C_2}$ interchanging $s_0$ with $s_2$ and $s_1$ with $s_3$. We set $W_2':=\langle s_0,s_2,\omega\rangle$ and $W_1=W_1'=\langle s_1,s_3 \rangle$. Then $W_2'\cap W_1'=\{1\}$ and hence $[W_{1,2}\backslash W_1 /W_{1,2}]=W_1$, but $s_1$ and $s_3$ define the same $W_2'-W_2'-$coset since $s_3=\omega s_1 \omega \in W_2's_1W_2'$.
\end{Example}

\subsection{Coset decompositions}
We briefly want to supplement the previous subsection by reviewing some results on the decomposition of double cosets with respect to parahoric subgroups. However, we will only state the results we need for our following considerations, in particular we do not give the explicit formula for coset decomposition since this would require a bunch of otherwise not needed notation and rather refer the reader to \cite{LanskyDecomposition}.

Let $P_1',P_2'$ be two parahoric subgroups of $\GG(F)$ containing $I$, then $P_i'=I W_i' I$, $i=1,2$, for certain (finite) subgroups $W_i' < \TW$ and $W_i'$ decomposes as $W_i'=W_i\Omega_i$ where $W_i=W_i'\cap W_{af}=\langle W_i' \cap \tilde{S} \rangle$ is normalized by $\Omega_i=\Omega \cap W_i'$.

\begin{Proposition}[\cite{IwahoriMatsumotoBruhatDecomposition}]\label{ParahoricIndex}
 We have $[P_1':I]=|\Omega_1|\sum_{w \in W_1} q^{\ell(w)}$ and hence if $P_2' \subset P_1'$  
\begin{equation}
 [P_1':P_2']=[\Omega_1:\Omega_2] \sum_{w \in [W_1/W_2]}q^{\ell(w)}.
\end{equation}
\end{Proposition}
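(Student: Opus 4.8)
The plan is to reduce everything to the Iwahori--Matsumoto--Bruhat decomposition $\GG(F)=\bigsqcup_{w \in \TW}IwI$ together with the coset count stating that each $IwI$ splits into exactly $q^{\ell(w)}$ left $I$-cosets. First I would record that, since $P_1'=IW_1'I$ and the Iwahori double cosets indexed by $\TW$ are pairwise disjoint, the parahoric $P_1'$ decomposes as the disjoint union $P_1'=\bigsqcup_{w \in W_1'}IwI$, whence
\begin{equation*}
 [P_1':I]=\sum_{w \in W_1'}[IwI:I]=\sum_{w \in W_1'}q^{\ell(w)}.
\end{equation*}
To justify the coset count for a general $w \in \TW$ (and not merely $w \in W_{af}$) I would write $w=\rho v$ with $\rho \in \Omega$ and $v \in W_{af}$ and use that $\rho$ normalizes $I$, so that $IwI=\rho\,IvI$ is a single $\Omega$-translate of an affine double coset; hence $[IwI:I]=[IvI:I]=q^{\ell(v)}=q^{\ell(w)}$, the last equality being the very definition of the extended length.

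Next I would exploit the split decomposition $W_1'=W_1\Omega_1$ recorded just before the statement, writing each $w \in W_1'$ uniquely as $w=v\rho$ with $v \in W_1$ and $\rho \in \Omega_1$. Since $\ell(v\rho)=\ell(v)$, the sum factors as
\begin{equation*}
 \sum_{w \in W_1'}q^{\ell(w)}=\sum_{v \in W_1}\sum_{\rho \in \Omega_1}q^{\ell(v)}=|\Omega_1|\sum_{v \in W_1}q^{\ell(v)},
\end{equation*}
which is the first assertion.

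For the index formula I would observe that $P_2'\subset P_1'$ forces $W_2'\subset W_1'$, hence $W_2\subset W_1$ and $\Omega_2\subset\Omega_1$, so that $[\Omega_1:\Omega_2]=|\Omega_1|/|\Omega_2|$. Applying the first formula to both parahorics and using the tower law $[P_1':P_2']=[P_1':I]/[P_2':I]$ (valid since $I\subset P_2'\subset P_1'$) gives
\begin{equation*}
 [P_1':P_2']=[\Omega_1:\Omega_2]\,\frac{\sum_{v \in W_1}q^{\ell(v)}}{\sum_{v \in W_2}q^{\ell(v)}}.
\end{equation*}
It then remains to factor the Poincar\'e polynomial of $W_1$ over that of $W_2$: writing $W_1=\bigsqcup_{\tau \in [W_1/W_2]}\tau W_2$ and using the length additivity $\ell(\tau v)=\ell(\tau)+\ell(v)$ for $\tau \in [W_1/W_2]$ and $v \in W_2$, which is precisely the defining property of $[W_1/W_2]$ recalled in the previous subsection, one obtains $\sum_{v \in W_1}q^{\ell(v)}=\bigl(\sum_{\tau \in [W_1/W_2]}q^{\ell(\tau)}\bigr)\bigl(\sum_{v \in W_2}q^{\ell(v)}\bigr)$. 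Substituting this cancels the denominator and yields the claim.

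The algebra of the three sums is routine; the point to get right is the behaviour of the $\Omega$-part, namely that the elements of $\Omega_1$ are length-preserving and $I$-normalizing, so that $\Omega_1$ contributes the clean factor $|\Omega_1|$ (respectively $[\Omega_1:\Omega_2]$) and does not interfere with the purely Coxeter-theoretic factorization over $W_1$ and $W_2$. This is the only place where one must be careful not to conflate $\TW$ with $W_{af}$.
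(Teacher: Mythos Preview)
Your argument is correct. Note, however, that the paper does not actually supply a proof of this proposition: it is quoted from \cite{IwahoriMatsumotoBruhatDecomposition} and used as input for the subsequent results. What you have written is precisely the standard derivation from the Iwahori--Matsumoto Bruhat decomposition and the left-coset count $[IwI:I]=q^{\ell(w)}$, together with the semidirect product structure $\tilde{W}=W_{af}\rtimes\Omega$ and the length additivity defining $[W_1/W_2]$. So there is nothing to compare against here; your proof simply makes explicit what the paper takes as a black box.
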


For computations with Hecke operators one needs to decompose double cosets of the form $P_1'xP_2'$ into $P_2'$-left cosets. An explicit formula to do this can be found in \cite[Thm. 5.7]{LanskyDecomposition}. Here we merely need the fact that this is actually algorithmically feasible as well as the following corollary.
\begin{Corollary}[\protect{\cite[Cor 5.8]{LanskyDecomposition}}]
Let $\sigma \in [W_1'\backslash \TW / W_2']$ and denote the stabilizer of $\sigma$ in $\Omega_1 \cap \Omega_2$ by $\Omega_{1,2}^\sigma$. Then we have
\begin{equation}
 |P_1'\sigma P_2'/P_2'|=[\Omega_1:\Omega_{1,2}^\sigma]\cdot q^{\ell(\sigma)}\cdot\sum_{w \in [W_1/W_1^{\sigma W_2}]}q^{\ell(w)}.
\end{equation}
\end{Corollary}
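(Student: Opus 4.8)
The plan is to reduce the statement to the case of the two ``affine'' parahorics $P_i := IW_iI$, where $\Omega$ plays no role, and then to account for the finite abelian groups $\Omega_1,\Omega_2$ separately. Throughout I would work with decompositions into left $I$-cosets, using that $(\GG(F),I,\mathbb{N}_\BT(F))$ is a generalized Tits system, so that $|IxI/I| = q^{\ell(x)}$ for every $x \in \TW$ and the usual $BN$-pair multiplication rules hold. First I would record two elementary facts: $P_i' = \Omega_i P_i = P_i\Omega_i$, since each $\Omega_i$ normalises both $I$ and $W_i$; and $[P_i':P_i] = |\Omega_i|$, because $\Omega\cap W_{af} = \{1\}$ (the extension $\TW\cong W_{af}\rtimes\Omega$ is split) forces the cosets $\rho P_i$, $\rho\in\Omega_i$, to be distinct.

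For the affine case I would establish the clean Iwahori decomposition
\begin{equation*}
 P_1\sigma P_2 = \bigsqcup_{\tau\in[W_1/W_1^{\sigma W_2}]}\ \bigsqcup_{w\in W_2} I\tau\sigma w I .
\end{equation*}
Here the elements $\tau\sigma w$ run exactly once over $W_1\sigma W_2$, using $W_1\sigma W_2 = \bigsqcup_\tau \tau\sigma W_2$ (as $W_1^{\sigma W_2}$ is the stabiliser of the coset $\sigma W_2$ in $W_1$), so by the Bruhat decomposition the double cosets $I\tau\sigma w I$ are pairwise distinct and hence disjoint. The decisive input is Theorem \ref{Lansky4.6}, which gives $\ell(\tau\sigma w) = \ell(\tau)+\ell(\sigma)+\ell(w)$; combined with $|IxI/I| = q^{\ell(x)}$ this yields $|I\tau\sigma w I/I| = q^{\ell(\tau)}q^{\ell(\sigma)}q^{\ell(w)}$. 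Grouping the blocks for fixed $\tau$ into $I\tau\sigma I\cdot P_2$ and dividing by $[P_2:I] = \sum_{w\in W_2}q^{\ell(w)}$ (Proposition \ref{ParahoricIndex}), each block contributes $q^{\ell(\tau)+\ell(\sigma)}$ left $P_2$-cosets; since the blocks are disjoint, summing over $\tau$ gives the affine formula $|P_1\sigma P_2/P_2| = q^{\ell(\sigma)}\sum_{\tau\in[W_1/W_1^{\sigma W_2}]}q^{\ell(\tau)} =: C$.

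To pass to $P_1',P_2'$ I would write $P_1'\sigma P_2' = \bigcup_{\rho_1\in\Omega_1,\,\rho_2\in\Omega_2} P_1(\rho_1\sigma\rho_2)P_2$, using that $\rho_1,\rho_2$ normalise $P_1,P_2$. Since $\ell(\rho_1\sigma\rho_2)=\ell(\sigma)$ and $\rho_1\sigma\rho_2$ is again the minimal-length representative of its $W_1$-$W_2$ double coset, each summand is an affine double coset to which the previous step applies; moreover $W_1^{(\rho_1\sigma\rho_2)W_2} = \rho_1 W_1^{\sigma W_2}\rho_1^{-1}$, and conjugation by $\rho_1$ is a length-preserving automorphism of $W_1$, so every summand contributes the \emph{same} number $C$ of left $P_2$-cosets. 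By Corollary \ref{Lansky4.7} the minimal representatives are unique, so two summands coincide iff $\rho_1\sigma\rho_2 = \rho_1'\sigma\rho_2'$; counting the fibres of the map $(\rho_1,\rho_2)\mapsto\rho_1\sigma\rho_2$ shows there are $|\Omega_1||\Omega_2|/|\Omega_{1,2}^\sigma|$ distinct summands. Finally, dividing the resulting count of $P_2$-cosets by $[P_2':P_2]=|\Omega_2|$ to pass to $P_2'$-cosets produces $[\Omega_1:\Omega_{1,2}^\sigma]\,C$, which is the claimed formula.

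The hard part will be the bookkeeping of the last paragraph, in two places. First, one must justify that $P_1'\sigma P_2'$ is genuinely the union of the affine double cosets $P_1(\rho_1\sigma\rho_2)P_2$ and that these are pairwise equal or disjoint: the inclusion of $P_1\sigma P_2$ into $\bigcup_{x\in W_1\sigma W_2}IxI$ and the stability of $W_1\sigma W_2$ under $BN$-pair multiplication by generators of $W_1$ and $W_2$ require the standard but slightly technical induction on $\ell(\cdot)$. Second, and more subtly, the fibre of $(\rho_1,\rho_2)\mapsto\rho_1\sigma\rho_2$ over $\sigma$ consists of pairs with $\sigma^{-1}\mu\sigma=\nu$ for $\mu\in\Omega_1,\ \nu\in\Omega_2$; one has to observe that $\sigma^{-1}\mu\sigma$ and $\mu$ have the same image in $\Omega=\TW/W_{af}$, so that $\sigma^{-1}\mu\sigma\in\Omega_2$ forces $\sigma^{-1}\mu\sigma=\mu\in\Omega_1\cap\Omega_2$, i.e. $\mu$ lies in the stabiliser $\Omega_{1,2}^\sigma$. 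This identification of the fibre with $\Omega_{1,2}^\sigma$ is exactly what turns the naive index into $[\Omega_1:\Omega_{1,2}^\sigma]$.
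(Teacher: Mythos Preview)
The paper does not give a proof of this corollary; it is quoted verbatim from \cite[Cor.~5.8]{LanskyDecomposition} as a consequence of the explicit coset decomposition \cite[Thm.~5.7]{LanskyDecomposition}, which the paper deliberately does not reproduce. So there is nothing in the present paper to compare your argument against.

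That said, your argument is correct and is essentially how one deduces the count from the $BN$-pair formalism without writing out the full explicit decomposition. The affine step is exactly right: Theorem~\ref{Lansky4.6} gives $\ell(\tau\sigma w)=\ell(\tau)+\ell(\sigma)+\ell(w)$, so $|P_1\sigma P_2/I|=q^{\ell(\sigma)}\bigl(\sum_\tau q^{\ell(\tau)}\bigr)\bigl(\sum_{w\in W_2}q^{\ell(w)}\bigr)$, and dividing by $[P_2:I]$ gives $C$. For the passage to $P_1',P_2'$ your bookkeeping is also fine; let me only make two points explicit. First, $\rho_1\sigma\rho_2$ is indeed the minimal-length representative of $W_1\rho_1\sigma\rho_2W_2=\rho_1(W_1\sigma W_2)\rho_2$ because left and right multiplication by elements of $\Omega$ preserve length, so Corollary~\ref{Lansky4.7} applies and distinct affine summands are disjoint. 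Second, your fibre computation is the key point: from $\rho_1\sigma\rho_2=\sigma$ one gets $\sigma^{-1}\rho_1\sigma=\rho_2^{-1}\in\Omega_2\subset\Omega$, while $\sigma^{-1}\rho_1\sigma$ and $\rho_1$ have the same image in $\TW/W_{af}\cong\Omega$; since the splitting $\Omega\hookrightarrow\TW$ is a section, this forces $\sigma^{-1}\rho_1\sigma=\rho_1\in\Omega_1\cap\Omega_2$ and $\rho_1\sigma=\sigma\rho_1$, i.e.\ $\rho_1\in\Omega_{1,2}^\sigma$. Hence each fibre has size $|\Omega_{1,2}^\sigma|$, there are $|\Omega_1|\,|\Omega_2|/|\Omega_{1,2}^\sigma|$ distinct affine pieces, and dividing by $[P_2':P_2]=|\Omega_2|$ yields $[\Omega_1:\Omega_{1,2}^\sigma]\cdot C$.
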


\section{Intertwining operators and Eichler algebras}\label{eichlersection}
We now want to introduce the method that allows us to compute two Hecke operators at once. Let $k$, $\GG$ and $V$ be as in Section \ref{heckealgebrassection} and let us fix two open and compact subgroups $K_1$ 
and $K_2$ of $\GG(\hk)$. We want to consider the spaces of algebraic modular forms of weight $V$ and levels 
$K_1$ and $K_2$, respectively, $M_i:=M(V,K_i),~i=1,2$.

The group $K_1 \cap K_2$ is again open and compact, hence of finite index in both $K_1$ and $K_2$. We fix coset representatives $m_i\in K_2, ~i \in I$, and $l_j\in K_1,~ j \in J$, such that
\begin{equation}
 \begin{split}
  K_1&=\bigsqcup_{j \in J} l_j(K_1 \cap K_2)=\bigsqcup_{j \in J}(K_1 \cap K_2)l_j^{-1},\\
  K_2&=\bigsqcup_{i \in I} m_i(K_1 \cap K_2)=\bigsqcup_{i \in I}(K_1 \cap K_2)m_i^{-1}.
 \end{split}
\end{equation}

The following is a simple observation but puts an easy upper bound on the amount of computations we have to do.

\begin{Remark}
 Let $\GG(\hk)=\bigsqcup_{s \in S}\GG(k)\gamma_s K_1$, then 
\begin{equation}
\GG(\hk)=\bigcup_{s \in S,j\in J}\GG(k)\gamma_sl_jK_2.
\end{equation}
 Thus the set $\{\gamma_sl_j:j \in J,s\in S\}$ contains a system of representatives for $\GG(k)\backslash \GG(\hk) /K_2$.
\end{Remark}

\subsection{Intertwining operators}

Remember that in the case where $K_1$ and $K_2$ are the stabilizers of two lattices $L$ and $M$, Eichler's 
method can be used to obtain the mass of the genus of $M$ from the mass of the genus of $L$ via
\begin{equation}
 \mass(\genus(M))=\mass(\genus(L))\cdot \frac{[K_1:K_1\cap K_2]}{[K_2:K_1 \cap K_2]}.
 \end{equation}

We want to take this idea one step further and try to employ the connection between $K_1$ and $K_2$ to compute the action of certain Hecke operators.  

\begin{Definition}
 We define 
\begin{equation}
 T^1_2=T^{K_1}_{K_2}: M_1 \rightarrow M_2,~ f\mapsto f',
\end{equation}
where
\begin{equation}
 f'(\gamma)=\sum_{i \in I} f(\gamma m_i) \text{ for all } \gamma \in \GG(\hk).
\end{equation}
We call $T^1_2$ the intertwining operator (with respect to $K_1$ and $K_2$ or from $M_1$ to $M_2$).
\end{Definition}

$T^1_2$ is well-defined and independent of the choice of the $m_i$ since $f$ is invariant under right-multiplication of the argument by elements of $K_1$ and hence, in particular, by elements of $K_1\cap K_2$. 

The operators $T^1_2$ and $T^2_1=T^{K_2}_{K_1}$ are connected in a way that becomes apparent if we consider the 
spaces $M_1$ and $M_2$ endowed with their inner products defined in Equation \ref{InnerProduct}.

 \begin{Lemma}
 The operators $T^1_2$ and $T^2_1$ are adjoint to each other. That is, for all $f \in M_1$ and $f' \in M_2$ we have 
\begin{equation}
 \langle T^1_2f,f'\rangle_{M_2} = \langle f,T^2_1f' \rangle_{M_1}.
\end{equation}
\end{Lemma}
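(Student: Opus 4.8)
The plan is to reduce the identity to a single symmetric quantity that does not distinguish $K_1$ from $K_2$, by first rewriting the Petersson product of Equation \ref{InnerProduct} as an honest integral. Since $\GG(k_\infty)$ is compact, $\GG(k)$ is discrete in $\GG(\hk)$ (so every $\Gamma_i$ is finite) and $\Sigma_K$ is finite, so after fixing any Haar measure $\mu$ on $\GG(\hk)$ the quotient $\GG(k)\backslash\GG(\hk)$ has finite $\mu$-volume. For $f,f'\in M(V,K)$ the function $\gamma\mapsto\langle f(\gamma),f'(\gamma)\rangle$ is left $\GG(k)$-invariant (because $\langle-,-\rangle$ is $\GG(k)$-invariant) and right $K$-invariant, so it descends to $\GG(k)\backslash\GG(\hk)/K$, and I would first establish the reformulation
\[
 \langle f,f'\rangle_M=\frac{1}{\mu(K)}\int_{\GG(k)\backslash\GG(\hk)}\langle f(\gamma),f'(\gamma)\rangle\,d\mu(\gamma).
\]
This follows by decomposing $\GG(k)\backslash\GG(\hk)=\bigsqcup_i\GG(k)\backslash\GG(k)\alpha_i K$ and noting that each piece carries induced $\mu$-volume $\mu(K)/|\Gamma_i|$ (the stabiliser being finite), so that summing the constant values $\langle f(\alpha_i),f'(\alpha_i)\rangle$ reproduces exactly the weights $1/|\Gamma_i|$ of Equation \ref{InnerProduct}.

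Granting the reformulation, the remaining computation is short. Write $K_0:=K_1\cap K_2$. Using $(T^1_2 f)(\gamma)=\sum_{i\in I}f(\gamma m_i)$ with $m_i\in K_2$, the right $K_2$-invariance of $f'$ (which gives $f'(\gamma)=f'(\gamma m_i)$) together with the translation invariance of $\mu$ shows that each of the $|I|=[K_2:K_0]$ summands $\int\langle f(\gamma m_i),f'(\gamma)\rangle\,d\mu$ equals $\int\langle f(\gamma),f'(\gamma)\rangle\,d\mu$, whence
\[
 \langle T^1_2 f,f'\rangle_{M_2}=\frac{[K_2:K_0]}{\mu(K_2)}\int_{\GG(k)\backslash\GG(\hk)}\langle f(\gamma),f'(\gamma)\rangle\,d\mu(\gamma)=\frac{1}{\mu(K_0)}\int_{\GG(k)\backslash\GG(\hk)}\langle f(\gamma),f'(\gamma)\rangle\,d\mu(\gamma),
\]
since $\mu(K_2)=[K_2:K_0]\,\mu(K_0)$. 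The mirror computation, using $(T^2_1 f')(\gamma)=\sum_{j\in J}f'(\gamma l_j)$ with $l_j\in K_1$ and the right $K_1$-invariance of $f$, yields the identical value $\tfrac{1}{\mu(K_0)}\int\langle f,f'\rangle\,d\mu$ for $\langle f,T^2_1 f'\rangle_{M_1}$. Comparing the two proves the adjointness; note that the common value is exactly the Petersson product of $f$ and $f'$ viewed as forms of level $K_0$.

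The only real obstacle is the reformulation lemma itself: one must verify that the ``groupoid volume'' of the finite-stabiliser pieces assembles the weights $1/|\Gamma_i|$ correctly, and that $\GG(k)$ is genuinely discrete in $\GG(\hk)$ so that the integral is defined. If one prefers to avoid integration, the same proof runs combinatorially: both $\langle T^1_2 f,f'\rangle_{M_2}$ and $\langle f,T^2_1 f'\rangle_{M_1}$ can be expanded over $\Sigma_{K_0}=\GG(k)\backslash\GG(\hk)/K_0$, grouping the inner sums into orbits of $\Gamma_i$ acting on $K_2/K_0$ (respectively $K_1/K_0$). The summand is constant on each orbit because $f(\alpha_i)\in V^{\Gamma_i}$ and $\langle-,-\rangle$ is $\GG(k)$-invariant, and the orbit length $|\Gamma_i|/|\Gamma_\delta|$ exactly cancels the weight, so both sides collapse to $\sum_{\delta}\frac{1}{|\Gamma_\delta|}\langle f(\delta),f'(\delta)\rangle$. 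This route uses only the finite data already introduced, at the price of more index bookkeeping.
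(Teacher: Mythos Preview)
Your argument is correct. The paper itself does not give a proof; it simply writes ``This is a tedious computation that works analogously to the proof for the adjoint of a Hecke operator'' and refers implicitly to Gross's Proposition~(6.9). What Gross does there is precisely the combinatorial route you sketch at the end: expand over representatives for $\GG(k)\backslash\GG(\hk)/K_0$, use orbit--stabiliser to trade the weights $1/|\Gamma_i|$ against orbit lengths, and observe that both sides collapse to the same sum $\sum_\delta |\Gamma_\delta|^{-1}\langle f(\delta),f'(\delta)\rangle$ indexed by $\Sigma_{K_0}$. So your second paragraph is exactly the ``tedious computation'' the paper has in mind.

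Your integral reformulation is a genuine alternative and is cleaner once the lemma $\langle f,f'\rangle_M=\mu(K)^{-1}\int_{\GG(k)\backslash\GG(\hk)}\langle f,f'\rangle\,d\mu$ is in place: the translation trick $f'(\gamma)=f'(\gamma m_i)$ followed by the change of variables $\gamma\mapsto\gamma m_i^{-1}$ makes the adjointness transparent, and the identification of both sides with the level-$K_0$ Petersson product is a nice conceptual payoff. The cost is the reformulation itself, which requires knowing that $\GG(k)$ is discrete in $\GG(\hk)$ (true here because $\GG(k_\infty)$ is compact, so discreteness in $\GG(\MA_k)$ transfers) and the volume computation $\mu(\GG(k)\backslash\GG(k)\alpha_i K)=\mu(K)/|\Gamma_i|$; both are standard and your justification is adequate. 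Either route is fine; the paper's implied route avoids measure theory entirely, while yours explains \emph{why} the computation works rather than merely verifying it.
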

\begin{proof}
 This is a tedious computation that works analogously to the proof for the adjoint of a Hecke operator.
\end{proof}

Since the adjoint operator of $T$ is uniquely determined by $T$, the above theorem shows that in applications it suffices to compute just one of the two. This observation is particularly useful in light of the next subsection.

\subsection{Eichler elements}

\begin{Lemma}
 The function $\sum_{i \in I,j \in J}\ind_{l_jm_iK_1}$ is an element of $H_{K_1}$.
\end{Lemma}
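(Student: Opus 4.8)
The plan is to verify directly the three defining properties of an element of $H_{K_1}$: local constancy, compact support, and $K_1$-bi-invariance. Since $K_1\cap K_2$ has finite index in both $K_1$ and $K_2$, the index sets $I$ and $J$ are finite, so $F:=\sum_{i\in I,j\in J}\ind_{l_jm_iK_1}$ is a finite sum of characteristic functions of the left cosets $l_jm_iK_1$. Each such coset is open and compact, being a translate of the open compact group $K_1$; hence $F$ is locally constant and its support lies in the finite union $\bigcup_{i,j}l_jm_iK_1$, which is compact. Right $K_1$-invariance is immediate, since each summand $\ind_{l_jm_iK_1}$ is constant on right $K_1$-cosets because $l_jm_iK_1k'=l_jm_iK_1$ for $k'\in K_1$.

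The only real content is left $K_1$-invariance, which I would phrase combinatorially. Reading off values gives $F(\gamma)=\#\{(i,j):l_jm_iK_1=\gamma K_1\}$. Using $\ind_A(k\gamma)=\ind_{k^{-1}A}(\gamma)$, the desired identity $F(k\gamma)=F(\gamma)$ for $k\in K_1$ follows once I show that left multiplication by any element of $K_1$ permutes the indexed family of cosets $(l_jm_iK_1)_{(i,j)\in I\times J}$ up to reindexing, because then $\sum_{i,j}\ind_{k^{-1}l_jm_iK_1}=\sum_{i,j}\ind_{l_jm_iK_1}=F$ and evaluation at $\gamma$ gives the claim.

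To produce the reindexing I would exploit the two transversal descriptions. Since $\{l_j\}$ represents $K_1/(K_1\cap K_2)$, left multiplication by $k$ induces a permutation $\sigma_k$ of $J$ with $kl_j=l_{\sigma_k(j)}c$ for a unique $c\in K_1\cap K_2$ depending on $k$ and $j$. As $c\in K_2$ and $\{m_i\}$ represents $K_2/(K_1\cap K_2)$, left multiplication by $c$ induces a permutation $\tau=\tau_{k,j}$ of $I$ with $cm_i=m_{\tau(i)}c'$ for some $c'\in K_1\cap K_2$. Because $c'\in K_1$, everything collapses to $kl_jm_iK_1=l_{\sigma_k(j)}m_{\tau(i)}K_1$. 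The induced index map $(i,j)\mapsto(\tau_{k,j}(i),\sigma_k(j))$ is a bijection of $I\times J$, its inverse first recovering $j=\sigma_k^{-1}(j')$ and then $i=\tau_{k,j}^{-1}(i')$. Hence left multiplication by any $k\in K_1$ merely reshuffles the family, establishing left invariance and thus $F\in H_{K_1}$.

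The step I expect to be the main obstacle is precisely this bijectivity bookkeeping, \emph{not} because it is deep but because the permutation $\tau_{k,j}$ of $I$ genuinely depends on $j$; one must observe that $(i,j)\mapsto(\tau_{k,j}(i),\sigma_k(j))$ is nonetheless a bijection as a skew product over the permutation $\sigma_k$ of $J$, rather than expecting a plain product of two fixed permutations. Everything else reduces to finiteness of the two transversals and the elementary translation rule for indicator functions.
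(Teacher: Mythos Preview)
Your argument is correct. The paper states this lemma without proof, so there is nothing to compare against; you have supplied exactly the verification one would expect. The only substantive point is indeed the left $K_1$-invariance, and your skew-product bijection $(i,j)\mapsto(\tau_{k,j}(i),\sigma_k(j))$ handles it cleanly---you are right to stress that $\tau_{k,j}$ depends on $j$, and that this causes no trouble because injectivity on the $J$-coordinate via $\sigma_k$ lets you recover $j$ first and then invert $\tau_{k,j}$.
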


\begin{Definition}
 We will call the function 
\begin{equation}
\nu_{1,2}:=\nu(K_1,K_2):=\sum_{i \in I,j \in J}\ind_{l_jm_iK_1} \in H_{K_1}
\end{equation}
 the Eichler element (of $H_{K_1}$) with respect to $K_2$. 
\end{Definition}

Note that the definition of the Eichler element makes sense not only in the given global situation but also locally (e.g. one could look at the Eichler element of $K_{1,\fp}$ and $K_{2,\fp}$ as an element of $H_{K_{1,\fp}}$). Moreover if $K_1$ and $K_2$ decompose as products of local factors that coincide at all but one place the ``global'' Eichler element is just the embedding of the ``local'' Eichler element (at this place). In this sense we will sometimes want to think of the global Eichler element as a local object and vice versa if no confusion arises from this. 

Since $(T^2_1T^1_2f)(\gamma)=\sum_{i \in I,j \in J}f(\gamma l_jm_i)$ for all $f \in M_1$ and $\gamma \in \GG(\hk)$, the above lemma immediately implies the following corollary.
\begin{Corollary}
 The linear operator $T^2_1T^1_2:~M_1 \rightarrow M_1$ is self-adjoint and acts as the element $\nu_{1,2}$ of the Hecke algebra $H_{K_1}$ on the space $M_1$.
\end{Corollary}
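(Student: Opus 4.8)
The plan is to treat the two assertions in turn, building on the identity $(T^2_1T^1_2f)(\gamma)=\sum_{i\in I,j\in J}f(\gamma l_jm_i)$ recorded immediately before the statement.

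For the claim that $T^2_1T^1_2$ acts as $\nu_{1,2}$, the first step is to record the action of an \emph{arbitrary} element of $H_{K_1}$ in terms of left cosets rather than double cosets: if $F\in H_{K_1}$ and $\eta_1K_1,\eta_2K_1,\dots$ are the distinct left cosets meeting its support, then $(T(F)f)(x)=\sum_n F(\eta_n)f(x\eta_n)$. This follows from the definition of the action of a single double-coset indicator, the observation that $F$ is constant on each double coset and hence takes its coefficient as value on every left coset inside it, and the additivity of the extension of $T$ to $H_{K_1}$. I would then apply this to $F=\nu_{1,2}=\sum_{i,j}\ind_{l_jm_iK_1}$, for which $\nu_{1,2}(\eta)$ counts the pairs $(i,j)$ with $l_jm_iK_1=\eta K_1$; invoking the right $K_1$-invariance of $f$ to replace $f(x\eta_n)$ by $f(xl_jm_i)$ for each such pair collapses the weighted sum to $\sum_{i,j}f(xl_jm_i)$. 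Comparing with the recorded identity yields $T^2_1T^1_2=T(\nu_{1,2})$ as endomorphisms of $M_1$.

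For self-adjointness I would use the preceding Lemma asserting that $T^1_2$ and $T^2_1$ are mutually adjoint, which gives both $\langle T^1_2f,f'\rangle_{M_2}=\langle f,T^2_1f'\rangle_{M_1}$ and, by symmetry of the (real) inner products, the swapped relation $\langle T^2_1g,h\rangle_{M_1}=\langle g,T^1_2h\rangle_{M_2}$. Applying these in succession for $f,g\in M_1$ gives $\langle T^2_1T^1_2f,g\rangle_{M_1}=\langle T^1_2f,T^1_2g\rangle_{M_2}=\langle f,T^2_1T^1_2g\rangle_{M_1}$, that is, $(T^2_1T^1_2)^*=T^2_1T^1_2$. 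Once the first part is known, self-adjointness can alternatively be deduced from Proposition \ref{AdjointOperator} after checking that $\nu_{1,2}$ is invariant under $\gamma\mapsto\gamma^{-1}$, but the adjointness route avoids that verification.

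The only real obstacle is the bookkeeping in the left-coset action formula when the cosets $l_jm_iK_1$ are not pairwise distinct: one must ensure that the multiplicity with which a given coset occurs in the sum defining $\nu_{1,2}$ matches the value $\nu_{1,2}(\eta)$ that enters the Hecke action. This is precisely where the $K_1$-bi-invariance of $\nu_{1,2}$ furnished by the preceding Lemma (guaranteeing that $\nu_{1,2}$ is a genuine Hecke element) and the right $K_1$-invariance of $f$ are used; once that matching is in place, both assertions are immediate.
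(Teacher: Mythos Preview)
Your proposal is correct and follows essentially the same route as the paper, which simply declares the corollary immediate from the identity $(T^2_1T^1_2f)(\gamma)=\sum_{i,j}f(\gamma l_jm_i)$ and the preceding lemma that $\nu_{1,2}\in H_{K_1}$. You have supplied the bookkeeping the paper omits---the left-coset description of the Hecke action and the multiplicity matching---and you derive self-adjointness from the mutual adjointness of $T^1_2$ and $T^2_1$, which is the natural argument even though the paper does not spell it out.
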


Since $l_jm_iK_1 \subset K_1 m_i K_1$ and $m_i \in K_2$, the Eichler element $\nu_{1,2}$ is only supported on 
the double cosets $K_1 m K_1$ with $m \in K_2$. On the other hand write $m \in K_2$ as $ m_i\kappa$ for some $i 
\in I$ and $\kappa \in K_1\cap K_2$, then $K_1 m K_1=K_1 m_i K_1$, which shows that each of these cosets 
actually appears in the support of $\nu_{1,2}$. Let us denote (by slight abuse of notation) the set of double 
cosets in $\GG(\hk)\ds K_1$ which have a representative in $K_2$ by ${K_2}\ds{K_1}$. Then we have seen that 
\begin{equation}
 \nu_{1,2}=\sum_{i \in I,j \in J}\ind_{l_jm_iK_1}=\sum_{K_1 \kappa K_1 \in K_2 \ds K_1} \nu_{1,2}(\kappa) \ind_{K_1 \kappa K_1}
\end{equation}

and furthermore
\begin{equation}
\begin{split} 
\nu_{1,2}(\kappa)&=| \left\{(i,j) \in I \times J~:~ \kappa \in l_jm_iK_1 \right\}|\\
         &=\frac{| \left\{(i,j) \in I \times J~:~  l_jm_iK_1 \subset K_1\kappa K_1 \right\}|}{| K_1\kappa K_1 / K_1|}\\
         &=\frac{|J|\cdot | \left\{i \in I~:~  m_i K_1  \subset K_1\kappa K_1 \right\}|}{| K_1\kappa K_1 / K_1|}\\
         &=\frac{[K_1:K_1\cap K_2] \cdot |\left\{i \in I~:~ m_i K_1  \subset K_1\kappa K_1 \right\}|}{| K_1\kappa K_1 / K_1|}
\end{split}
\end{equation}

If the intersection of $K_1$ and $K_2$ is small, $\nu_{1,2}$ will have very large support and hence will in general not be of particular interest. However, if $K_1 \cap K_2$ has (in some sense) small index in both $K_1$ and $K_2$ the operator $\nu_{1,2}$ will be a linear combination of only a few elements of the standard basis of $H_{K_1}$, and thus $T(\nu_{1,2})$ might prove to be useful for the computation of the action of $H_{K_1}$ on $M_1$. 

Let us now assume that $K_i=\prod_\fq K_{i,\fq},~i=1,2,$ are both products of local factors $K_{i,\fq}$, where $\fq$ runs over the finite places of $k$ (this is the case for example when $K_1$ and $K_2$ arise as stabilizers of lattices). Furthermore assume that there is a finite place $\fp$ of $k$ such that $\GG$ is split at $\fp$ and $K_{1,\fp}$ and $K_{2,\fp}$ are two parahoric subgroups of $\GG(k_\fp)$ containing a common Iwahori subgroup $I$ while $K_{1,\fq}=K_{2,\fq}$ for all $\fq \neq \fp$. Let $\tilde{W}$ be the extended affine Weyl group of $\GG(k_\fp)$ (with respect to a suitable torus whose integral points are contained in $I$) and $W_1,W_2 \leq \tilde{W}$ the subgroups corresponding to $K_{1,\fp}$ and $K_{2,\fp}$ via
\begin{equation}
 K_{i,\fp}=\bigsqcup_{w \in W_i}IwI,~i=1,2.
\end{equation}
Furthermore set $W_{1,2}:=W_1 \cap W_2$. 

If $W_1 \subset W_{af}$ is a special subgroup of $W_{af}$ we have
\begin{equation}
 K_2 \ds K_1=\left\{ K_1 \sigma K_1 ~|~\sigma \in [W_{1,2}\backslash W_2 /W_{1,2}] \right\}
\end{equation}
(where we embed the $\sigma \in [W_{1,2}\backslash W_2 / W_{1,2}]$ into $\GG(\hk)$ in the usual way) and 
these cosets are pairwise distinct by Corollary \ref{CosetRepresentativesW121}.
In this case we can give an explicit formula for the values $\nu_{1,2}(\kappa)$ from above.
\begin{Theorem}\label{VenkovElementAffine}
 If $W_1 \subset W_{af}$ the following holds:
\begin{equation}
 \nu_{1,2}=\sum_{i \in I,j \in J}\ind_{l_jm_iK_1}=\sum_{\kappa \in [W_{1,2}\backslash W_2 / W_{1,2}]} [I W_1^{\kappa W_1} I: I (W_1^{\kappa W_1} \cap W_2)I] \ind_{K_1 \kappa K_1}.
\end{equation}
\end{Theorem}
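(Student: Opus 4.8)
The plan is to work locally at $\fp$: since $K_1$ and $K_2$ agree away from $\fp$ and the global Eichler element is the image of the local one, it suffices to evaluate the coefficients of the local Eichler element of the two parahorics $P_i:=K_{i,\fp}=IW_iI$ in $\GG(k_\fp)$. I will use the formula for $\nu_{1,2}(\kappa)$ established just above the theorem, namely
\[
\nu_{1,2}(\kappa)=\frac{[P_1:P_1\cap P_2]\cdot N(\kappa)}{|P_1\kappa P_1/P_1|},\qquad N(\kappa):=|\{\,i:m_iP_1\subset P_1\kappa P_1\,\}|,
\]
where the $m_i$ run over a transversal of $P_2/(P_1\cap P_2)$, and show that for $\kappa\in[W_{1,2}\backslash W_2/W_{1,2}]$ it equals $[IVI:I(V\cap W_2)I]$ with $V:=W_1^{\kappa W_1}$. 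Throughout I abbreviate $p_H:=\sum_{w\in H}q^{\ell(w)}$ for a special subgroup $H$. The first step is to establish that $\kappa$ is of minimal length in its double coset $W_1\kappa W_1$, i.e. $\kappa\in[W_1\backslash\tilde{W}/W_1]$; this is the mirror image of Lemmas \ref{CosetSwitch} and \ref{DoubleCosetEmbedding} with the indices $1$ and $2$ exchanged, and it is precisely the hypothesis $W_1\subset W_{af}$ (equivalently $W_1'=W_1$) that lets the mirrored argument run, the failure for nontrivial $\Omega$ being exactly Example \ref{CounterexampleDoubleCosets}. By \cite[Prop. 4.5]{LanskyDecomposition} the group $V=W_1^{\kappa W_1}$ is then special in $W_1$, and Theorem \ref{Lansky4.6} provides the length-additive normal form used below.

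Next I evaluate the three factors. Because the Iwahori double cosets $IwI$ are disjoint and $W_1\cap W_2'=W_{1,2}$ (the $\Omega$-part of $W_2'$ cannot lie in $W_{af}\supset W_1$), one gets $P_1\cap P_2=IW_{1,2}I$, so Proposition \ref{ParahoricIndex} gives $[P_1:P_1\cap P_2]=p_{W_1}/p_{W_{1,2}}$. Applying \cite[Cor. 5.8]{LanskyDecomposition} with $P_1'=P_2'=P_1$ yields $|P_1\kappa P_1/P_1|=q^{\ell(\kappa)}\,p_{W_1}/p_{V}$. For $N(\kappa)$ I note that the cosets $m_iP_1$ are exactly the distinct left $P_1$-cosets meeting $P_2$, so $N(\kappa)=|(P_2\cap P_1\kappa P_1)/(P_1\cap P_2)|$; decomposing into Iwahori cells via $P_1\kappa P_1=\bigsqcup_{v\in W_1\kappa W_1}IvI$ and $P_1\cap P_2=IW_{1,2}I$, and counting left $I$-cosets on both sides, reduces this to
\[
N(\kappa)=\frac{1}{p_{W_{1,2}}}\sum_{u\in W_2\cap W_1\kappa W_1}q^{\ell(u)}.
\]

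The heart of the argument, and what I expect to be the main obstacle, is the combinatorial identity $W_2\cap W_1\kappa W_1=W_{1,2}\kappa W_{1,2}$. The inclusion $\supseteq$ is immediate from $W_{1,2}\subset W_1\cap W_2$. For $\subseteq$, take $u\in W_2\cap W_1\kappa W_1$ and write it in the length-additive normal form $u=a\kappa b$ with $a\in[W_1/V]$ and $b\in W_1$, so that $\ell(u)=\ell(a)+\ell(\kappa)+\ell(b)$ by Theorem \ref{Lansky4.6}. Concatenating reduced words for $a$, $\kappa$, $b$ then produces a reduced word for $u$; since $u$ lies in the special subgroup $W_2$, all of its letters lie in $S_2$, while $a,b\in W_1$ force the letters of their (reduced) subwords to lie in $S_1$ as well. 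Hence $a,b\in\langle S_1\cap S_2\rangle=W_{1,2}$ and $u\in W_{1,2}\kappa W_{1,2}$. Securing the length-additive normal form and then exploiting the standard fact that reduced words of an element of a special subgroup only use the generators of that subgroup is the crux.

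With the identity in hand the proof assembles quickly. Since $\kappa\in[W_{1,2}\backslash\tilde{W}/W_{1,2}]$, Theorem \ref{Lansky4.6} applied inside $W_{1,2}$ gives
\[
\sum_{u\in W_2\cap W_1\kappa W_1}q^{\ell(u)}=\sum_{u\in W_{1,2}\kappa W_{1,2}}q^{\ell(u)}=q^{\ell(\kappa)}\,\frac{p_{W_{1,2}}^{\,2}}{p_{W_{1,2}^{\kappa W_{1,2}}}},
\]
and a direct check using $\kappa\in W_2$ shows $W_{1,2}^{\kappa W_{1,2}}=W_{1,2}\cap{}^{\kappa}W_1=V\cap W_2$. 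Therefore $N(\kappa)=q^{\ell(\kappa)}p_{W_{1,2}}/p_{V\cap W_2}$, and substituting the three factors gives
\[
\nu_{1,2}(\kappa)=\frac{(p_{W_1}/p_{W_{1,2}})\,(q^{\ell(\kappa)}p_{W_{1,2}}/p_{V\cap W_2})}{q^{\ell(\kappa)}\,p_{W_1}/p_{V}}=\frac{p_V}{p_{V\cap W_2}}=[IVI:I(V\cap W_2)I],
\]
the last equality by Proposition \ref{ParahoricIndex}. This is the asserted coefficient $[IW_1^{\kappa W_1}I:I(W_1^{\kappa W_1}\cap W_2)I]$; that the $\kappa\in[W_{1,2}\backslash W_2/W_{1,2}]$ index the support of $\nu_{1,2}$ without repetition is the mirror of Corollary \ref{CosetRepresentativesW121}, which finishes the proof.
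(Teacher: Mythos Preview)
Your proof is correct and follows the same overall architecture as the paper's: localize at $\fp$, use the coefficient formula
\[
\nu_{1,2}(\kappa)=\frac{[P_1:P_{1,2}]\cdot N(\kappa)}{|P_1\kappa P_1/P_1|},
\]
compute the three factors via Proposition~\ref{ParahoricIndex} and \cite[Cor.~5.8]{LanskyDecomposition}, and cancel. The one place where you diverge from the paper is the evaluation of $N(\kappa)$. The paper argues in one stroke that for $x\in P_2$ one has $P_1xP_1=P_1\kappa P_1\Leftrightarrow P_{1,2}xP_{1,2}=P_{1,2}\kappa P_{1,2}$, citing (the mirror of) Corollary~\ref{CosetRepresentativesW121}, and then reads off $N(\kappa)=|P_{1,2}\kappa P_{1,2}/P_{1,2}|$. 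You instead establish the equivalent Weyl-group identity $W_2\cap W_1\kappa W_1=W_{1,2}\kappa W_{1,2}$ directly, by writing $u=a\kappa b$ in the length-additive normal form of Theorem~\ref{Lansky4.6} and invoking the standard fact that every reduced expression for an element of a special subgroup uses only that subgroup's generators. Both routes produce the same $N(\kappa)=q^{\ell(\kappa)}p_{W_{1,2}}/p_{V\cap W_2}$; the paper's is a touch shorter because the needed corollary is already on the shelf, while yours is a self-contained Coxeter-theoretic argument that makes the underlying combinatorial identity explicit rather than hiding it behind minimal-length uniqueness.
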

\begin{proof}
 First note that $W_1^{\kappa W_1}$ is a special subgroup of $W_1$ by Corollary \ref{SpecialIntersection}, hence $IW_1^{\kappa W_1} I$ and  $I(W_1^{\kappa W_1} \cap W_2)I$ are in fact subgroups of $\GG(k_\fp)$ for $\kappa \in [W_{1,2}\backslash W_2 / W_{1,2}]$. Furthermore, since $K_1$ and $K_2$ coincide away from $\fp$, we can perform our computation locally and we set $P_i:=K_{i,\fp},~i=1,2$.

 Now let $\kappa \in [W_{1,2}\backslash W_2 / W_{1,2}]$, then we need to show that $\nu_{1,2}(\kappa)=[I W_1^{\kappa W_1} I: I (W_1^{\kappa W_1} \cap W_2)I]$. We already noticed that 
\begin{equation}
 \nu_{1,2}(\kappa)=\frac{[P_1:P_1\cap P_2] \cdot |\left\{i \in I ~:~ m_i P_1  \subset P_1\kappa P_1 \right\}|}{| P_1\kappa P_1 / P_1|}.
\end{equation}
Note that the set in the numerator makes sense since we can assume that the $m_i$ are only supported at $\fp$.

We will now make the terms appearing in this description more explicit.

First of all we have
\begin{equation}
  [P_1:P_1\cap P_2]=[IW_1I:I(W_1\cap W_2)I]=\frac{[IW_1I:I]}{[IW_{1,2}I:I]}.
 \end{equation}
By Lemma \ref{DoubleCosetEmbedding} we see that $\kappa$ is also of shortest length in the double coset $W_1\kappa W_1$, hence by \cite[Thm 5.2]{LanskyDecomposition} and Corollary \ref{ParahoricIndex}
\begin{equation}
 | P_1\kappa P_1 / P_1|=q^{\ell(\kappa)}\cdot \sum_{w \in [  W_1 / W_1^{\kappa W_1}]}q^{\ell(w)}=q^{\ell(\kappa)} [IW_1I:IW_1^{\kappa W_1}I],
\end{equation}
where $q$ is the order of the residue class field at $\fp$. 

Now consider the set $A:=\left\{ i \in I ~:~ m_i P_1  \subset P_1\kappa P_1\right\}$ whose cardinality gives the last term in the above expression. Let $x \in P_2$, then $xP_1 \subset P_1\kappa P_1$ if and only if $P_1xP_1=P_1\kappa P_1$ and by Corollary \ref{CosetRepresentativesW121} this is the case if and only if already $P_{1,2}xP_{1,2}=P_{1,2}\kappa P_{1,2}$. In particular this means that $i \in A$ if and only if $m_iP_{1,2} \subset P_{1,2}\kappa P_{1,2}$ which implies
\begin{equation}
 |A|=| P_{1,2}\kappa P_{1,2} / P_{1,2}|=q^{\ell(\kappa)}\sum_{w' \in [W_{1,2} / W_{1,2}^{\kappa W_{1,2}}]} q^{\ell(w')}=q^{\ell(\kappa)}[IW_{1,2}I:IW_{1,2}^{\kappa W_{1,2}}I].
\end{equation}
 Note that 
\begin{equation}
W_{1,2}^{\kappa W_{1,2}}=(W_1 \cap W_2) \cap {^\kappa(W_1 \cap W_2)}=W_{1,2} \cap {^\kappa W_1}
\end{equation}
since $\kappa \in W_2$ and hence ${^\kappa W_2}=W_2$.

Now we put all of this together and see
\begin{equation}
\begin{split}
 \nu_{1,2}(\kappa)&=\frac{[P_1:P_1\cap P_2] \cdot |\left\{i \in I ~:~ m_i P_1 \subset P_1\kappa P_1 \right\}|}{|P_1\kappa P_1 / P_1|}\\
&=\frac{[IW_1I:I] \cdot | P_{1,2}\kappa P_{1,2} / P_{1,2}|\cdot [IW_1^{\kappa W_1}I:I]}{q^{\ell(\kappa)}[IW_1I:I]\cdot[IW_{1,2}I:I]}\\
&=\frac{q^{\ell(\kappa)}[IW_{1,2}I:IW_{1,2}^{\kappa W_{1,2}}I] \cdot [IW_1^{\kappa W_1}I:I]}{q^{\ell(\kappa)}\cdot[IW_{1,2}I:I]}\\
&=\frac{[IW_{1,2}I:I] \cdot [IW_1^{\kappa W_1}I:I]}{[IW_{1,2}^{\kappa W_{1,2}}I:I]\cdot[IW_{1,2}I:I]}\\
&=\frac{[IW_1^{\kappa W_1}I:I]}{[IW_{1,2}^{\kappa W_{1,2}}I:I]}\\
&=[IW_1^{\kappa W_1}I:IW_{1,2}^{\kappa W_{1,2}}I].
\end{split}
\end{equation}
But this was exactly our assertion.
\end{proof}

Note that the condition $W_1 \leq W_{af}$ is necessary for the result to hold as is shown by Example 
\ref{CounterexampleDoubleCosets}. The condition is always fulfilled if $\GG$ is simply connected (in which case 
$\widetilde{W}=W_{af}$) or if $P_1=K_{1,\fp}$ is a hyperspecial maximal compact subgroup.

\begin{Example}\label{ExampleC2}
 In the situation of the above theorem let $\GG$ be simply connected of type $C_2$ (so a form of $\Sp_4$). The extended Dynkin diagram is of the form 
\begin{center}
\begin{tikzpicture}

    \draw (0,0.07) -- (4,0.07);
    \draw (0,-0.07) -- (4,-0.07);
    \draw (1.1,0) -- (0.9,0.2);
    \draw (1.1,0) -- (0.9,-0.2);
    \draw (2.9,0) -- (3.1,0.2);
    \draw (2.9,0) -- (3.1,-0.2);
    \draw[fill=white] (0,0) circle(.1);
    \draw[fill=white] (2,0) circle(.1);
    \draw[fill=white] (4,0) circle(.1);
    
    \node at (-1,0) {$\widetilde{C}_{2}:$};
    \node at (0,0.35) {$0$};
    \node at (2,0.35) {$1$};
    \node at (4,0.35) {$2$};

\end{tikzpicture} 
\end{center}
Let $W_1:=\langle s_1,s_2 \rangle,W_2:=\langle s_0,s_2 \rangle,W_3:=\langle s_0,s_1 \rangle$ and choose open compact subgroups $K_1,K_2$ and $K_3$ of $\GG(\hk)$ such that $K_{i,\fq}=K_{j,\fq}$ for all $\fq \neq \fp$ while $K_{i,\fp}=IW_i I$ for a suitably chosen Iwahori subgroup of $\GG(k_\fp)$. Then $K_{1,\fp}$ is a hyperspecial maximal compact subgroup and $H_{K_{1,\fp}}$ is generated by $\ind_{K_{1,\fp}s_0K_{1,\fp}},\ind_{K_{1,\fp}s_0s_1s_0K_{1,\fp}}$.

 We set $q:=\mathrm{N}_{k/\MQ}(\fp)$ and  compute:
\begin{equation}
\begin{split}
  &[W_{1,2}\backslash W_2 / W_{1,2}]=\{1,s_0\},  {^{s_0}W_1} \cap W_1 = \langle s_2 \rangle = {^{s_0}W_1} \cap W_1 \cap W_2.\\
  &\rightsquigarrow \nu_{1,2}=(q^3+q^2+q+1)\ind_{K_1}+\ind_{K_1s_0K_1}.\\
  &[W_{1,3}\backslash W_3 / W_{1,3}]=\{1,s_0,s_0s_1s_0\},{^{s_0s_1s_0}W_1} \cap W_1 = \langle s_1 \rangle = {^{s_0s_1s_0}W_1} \cap W_1 \cap W_3.\\
 &\rightsquigarrow \nu_{1,3}=(q^3+q^2+q+1)\ind_{K_1}+(q+1)\ind_{K_1s_0K_1}+\ind_{K_1s_0s_1s_0K_1}.
\end{split}
\end{equation}
Moreover 
\begin{equation}
\begin{split}
\nu_{2,1}&=(q+1)\ind_{K_2}+\ind_{K_2s_1K_2}+\ind_{K_2s_1s_2s_1K_2} \text{ and }\\ \nu_{3,1}&=(q^3+q^2+q+1)\ind_{K_3}+(q+1)\ind_{K_3s_2K_3}+\ind_{K_3s_2s_1s_2K_3}.
\end{split}
\end{equation}
This means that, starting from a decomposition $\GG(\hk)=\sqcup_{i=1}^r \GG(k) \gamma_i K_1$, we can compute the operators $T^2_1$ and $T^3_1$ to obtain the full action of the Hecke algebra $H_{K_{1,\fp}}$ on $M(V,K_1)$. In addition we achieve coset decompositions for $\GG(k) \backslash \GG(\hk) / K_2$ and $\GG(k) \backslash \GG(\hk) / K_3$ as well as two more operators (acting on $M(V,K_2)$ and $M(V,K_3)$ respectively) along the way. If we were to compute $T(K_1s_0K_1)$ and $T(K_1s_0s_1s_0K_1)$ directly we would have to deal with $q(q^3+q^2+q+1)$ and $q^3(q^3+q^2+q+1)$ left cosets respectively while computing $T^2_1$ and $T^3_1$ only requires us to consider $(q^3+q^2+q+1)$ left cosets each (where $q$ denotes the norm of $\fp$ i.e. the order of the residue class field at $\fp$).
\end{Example}

Now we want to consider the situation where $W_1 \nleq W_{af}$. To that end we change the notation slightly and consider two parahoric subgroups $P_1'=K_{1,\fp}',P_2'=K_{2,\fp}'$ with $P_i'=IW_i'I=IW_i\Omega_iI$, where $W_i \leq W_{af}$ and $\Omega_i \leq \Omega$ fixes $S_i=S \cap W_i$. Furthermore we set $P_{1,2}=P_1 \cap P_2=I(W_1 \cap W_2)I=I(W_{1,2})I$ and $P_{1,2}'=P_1' \cap P_2'=IW_{1,2}\Omega_{1,2}I$. Finally we choose representatives for certain coset decompositions as follows
\begin{equation}
\begin{split}
 P_1'&=\bigsqcup_j P_{1,2}'l_j',\\
 P_1&=\bigsqcup_j P_{1,2}l_j,\\
 P_2'&=\bigsqcup_i P_{1,2}'m_i',\\
 P_2&=\bigsqcup_i P_{1,2}m_i.\\
\end{split}
\end{equation}

In addition we choose representatives $\rho$ for $\Omega_1$, $\sigma$ for $\Omega_{1,2}$.

Now consider the element
\begin{equation}
 \tilde{\nu_{1,2}}:=\sum_{\sigma,\sigma' \in \Omega_{1,2}}\sum_{i,j}\ind_{l_j'\sigma'm_i'\sigma P_1'} \in H_{P_1'}.
\end{equation}
We see that
\begin{equation}
 \begin{split}
  \tilde{\nu_{1,2}}&=|\Omega_{1,2}|\sum_{\sigma' \in \Omega_{1,2}}\sum_{i,j}\ind_{l_j'\sigma'm_i'P_1'} \in H_{P_1'}\\
  &=|\Omega_{1,2}|^2\sum_{i,j}\ind_{l_j'm_i'P_1' } \in H_{P_1'}\\
 &=|\Omega_{1,2}|^2 \nu(P_1',P_2').
 \end{split}
\end{equation}
In particular, if we want to study $\nu(P_1',P_2')$ we can study $\tilde{\nu_{1,2}}$ instead.
\begin{Theorem}
 For $\kappa \in [W_{1,2} \backslash W_2' / W_{1,2}]$ set 
\begin{equation}
t_\kappa:=[I W_1^{\kappa W_1} I: I (W_1^{\kappa W_1} \cap W_2)I],
\end{equation}
 the coefficient of $\ind_{P_1\kappa P_1}$ in $\nu(P_1,P_2')$. Then the following holds:
\begin{equation}
 \nu(P_1',P_2')=|\Omega_{1,2}|^{-2}\sum_{\kappa \in [W_{1,2} \backslash W_2' / W_{1,2}]} t_\kappa |\Omega_1^\kappa| \ind_{P_1'\kappa P_1'},
\end{equation}
where $\Omega_1^\kappa$ denotes the stabilizer of $\kappa$ in $\Omega_1$.
\end{Theorem}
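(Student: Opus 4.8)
The plan is to reduce the statement about $\nu(P_1',P_2')$ for the full parahoric subgroups (possibly containing $\Omega$-parts) to the already-proven affine case, Theorem \ref{VenkovElementAffine}, by exploiting the auxiliary element $\tilde{\nu_{1,2}}$ introduced just above the statement. We already know that $\tilde{\nu_{1,2}}=|\Omega_{1,2}|^2\,\nu(P_1',P_2')$, so it suffices to compute the coefficients of $\tilde{\nu_{1,2}}$ in the standard basis $\{\ind_{P_1'\kappa P_1'}\}$ and then divide by $|\Omega_{1,2}|^2$. The coefficient of $\ind_{P_1'\kappa P_1'}$ in $\tilde{\nu_{1,2}}$ counts (weighted by $|\Omega_{1,2}|$ from the inner sum over $\sigma$) the quadruples $(\sigma',i,j)$ with $l_j'\sigma'm_i'P_1'\subset P_1'\kappa P_1'$, i.e.\ $l_j'\sigma'm_i'\in P_1'\kappa P_1'$.

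First I would fix $\kappa\in[W_{1,2}\backslash W_2'/W_{1,2}]$ and relate the double coset $P_1'\kappa P_1'$ to the affine double coset $P_1\kappa P_1$. The key point is that $P_1'=\bigsqcup_{\rho\in\Omega_1}P_1\rho$, so $P_1'\kappa P_1'$ decomposes into $\Omega_1$-translates of $P_1\kappa' P_1$ for the various $\kappa'\in\Omega_1\kappa\Omega_1$; the number of \emph{distinct} affine double cosets arising this way is governed by $[\Omega_1:\Omega_1^\kappa]$, where $\Omega_1^\kappa$ is the stabilizer of $\kappa$ (acting by conjugation, since $\rho\kappa\rho^{-1}$ lands in the same $W_2'$-coset structure). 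This is exactly where the factor $|\Omega_1^\kappa|$ will enter. I would then pass the $\Omega$-action through the counting: the condition $l_j'\sigma'm_i'\in P_1'\kappa P_1'$ should, after absorbing the $\Omega_{1,2}$-factors $\sigma',\sigma$ into the $P_{1,2}'$-coset representatives and using that $\Omega_{1,2}$ normalizes $W_{1,2}$, reduce to the affine counting problem already solved for $\nu(P_1,P_2')$, whose coefficient is $t_\kappa=[IW_1^{\kappa W_1}I:I(W_1^{\kappa W_1}\cap W_2)I]$.

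Concretely, the bookkeeping I expect is as follows: summing over $\sigma\in\Omega_{1,2}$ and $\sigma'\in\Omega_{1,2}$ gives an overall factor that combines with the orbit-counting for $\Omega_1$ acting on $\kappa$. Each affine double coset in the $\Omega_1$-orbit of $\kappa$ contributes $t_\kappa$ (by Theorem \ref{VenkovElementAffine} applied with $W_2'$ in place of $W_2$, or rather to $\nu(P_1,P_2')$ as the statement already names $t_\kappa$ the coefficient there), and there are $[\Omega_1:\Omega_1^\kappa]$ such cosets fused into the single $P_1'$-double coset $P_1'\kappa P_1'$; meanwhile the passage from $\ind_{P_1\kappa P_1}$-coefficients to $\ind_{P_1'\kappa P_1'}$-coefficients introduces the index of $P_1$ in $P_1'$, i.e.\ a factor $|\Omega_1|$. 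Tracking these multiplicities against the prefactor $|\Omega_{1,2}|^{-2}$ coming from $\tilde{\nu_{1,2}}=|\Omega_{1,2}|^2\nu(P_1',P_2')$ is what should produce the stated coefficient $|\Omega_{1,2}|^{-2}t_\kappa|\Omega_1^\kappa|$.

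The main obstacle will be the careful orbit-and-stabilizer accounting under the two superimposed $\Omega$-actions: I must verify that conjugation by $\Omega_1$ permutes the affine double cosets $P_1\kappa' P_1$ inside $P_1'\kappa P_1'$ with stabilizer exactly $\Omega_1^\kappa$, and simultaneously confirm that the affine coefficient $t_\kappa$ is genuinely constant along this orbit (so that it factors out of the sum). The subtlety is that Corollary \ref{CosetRepresentativesW121}, which guaranteed distinctness of the $W_2$-double cosets in the affine case, \emph{fails} when $\Omega$-parts are present (Example \ref{CounterexampleDoubleCosets}), so I cannot naively invoke it for $W_2'$; instead the fusion of cosets must be quantified precisely by $\Omega_1^\kappa$ rather than assumed to be trivial. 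Getting this index bookkeeping to land exactly on $|\Omega_{1,2}|^{-2}|\Omega_1^\kappa|$, rather than off by a spurious factor of $[\Omega_1:\Omega_1^\kappa]$ or $|\Omega_{1,2}|$, is the delicate heart of the argument.
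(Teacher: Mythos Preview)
Your overall strategy coincides with the paper's: reduce to the affine case via $\tilde{\nu_{1,2}}=|\Omega_{1,2}|^2\,\nu(P_1',P_2')$ and extract $\nu(P_1,P_2')$ with its known coefficients $t_\kappa$. However, your execution via orbit--stabilizer counting is where the proposal becomes hazy, and the paper avoids exactly this difficulty by two moves you have not identified.

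First, rather than counting which $(l_j',\sigma',m_i')$ land in $P_1'\kappa P_1'$, the paper reindexes the sum using the set identity
\[
\{\,l_j'\sigma P_{1,2}\mid \sigma\in\Omega_{1,2},\,j\,\}=\{\,\rho\, l_j P_{1,2}\mid \rho\in\Omega_1,\,j\,\},
\]
which converts the $(\sigma',l_j')$-sum over $\Omega_{1,2}\times (P_1'/P_{1,2}')$ into a $(\rho,l_j)$-sum over $\Omega_1\times (P_1/P_{1,2})$. After an analogous absorption of $\rho'$ on the right (using that $\Omega_1$ normalizes $P_1$), one obtains pointwise
\[
\tilde{\nu_{1,2}}(x)=\sum_{\rho,\rho'\in\Omega_1}\nu(P_1,P_2')(\rho x\rho')
=\sum_{\kappa}\;t_\kappa\!\!\sum_{\rho,\rho'\in\Omega_1}\ind_{P_1\kappa P_1}(\rho x\rho').
\]
This sidesteps your proposed decomposition of $P_1'\kappa P_1'$ into $\Omega_1$-translates of affine double cosets entirely.

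Second, the factor $|\Omega_1^\kappa|$ is \emph{not} obtained by counting how many affine cosets fuse into $P_1'\kappa P_1'$ (that fusion is already implicit in the fact that the $\kappa$-sum in the theorem is over $[W_{1,2}\backslash W_2'/W_{1,2}]$, where distinct $\kappa$ may give the same $P_1'$-double coset, as noted after the proof). Instead it comes from the identity
\[
\sum_{\rho,\rho'\in\Omega_1}\ind_{P_1\kappa P_1}(\rho x\rho')=|\Omega_1^\kappa|\,\ind_{P_1'\kappa P_1'}(x),
\]
proved by writing $P_1\kappa P_1=\bigsqcup_r \kappa_r P_1$, summing over $\rho'$ to turn $P_1$ into $P_1'$, and then invoking \cite[La.~5.7]{LanskyDecomposition} for the remaining $\rho$-sum. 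Your heuristic that one picks up a factor $[\Omega_1:\Omega_1^\kappa]$ from orbit size and a further $|\Omega_1|$ from $[P_1':P_1]$ would not assemble into $|\Omega_1^\kappa|$; the mechanism is genuinely different.
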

\begin{proof}
As noted before we will compute $\tilde{\nu_{1,2}}$. Notice that $\{l_j'\sigma P_{1,2}~|~\sigma \in \Omega_{1,2},j\}=\{\rho l_jP_{1,2} ~|~\rho \in \Omega_{1},j\}$ and compute for arbitrary $x$:
\begin{equation}
 \begin{split}
   \tilde{\nu_{1,2}}(x)&=\sum_{\sigma\in \Omega_{1,2},\rho\in \Omega_1}\sum_{i,j}\ind_{\rho l_jm_i'\sigma P_1'}(x) \\
  &=\sum_{\sigma\in \Omega_{1,2},\rho,\rho'\in \Omega_1}\sum_{i,j}\ind_{\rho l_j m_i'\sigma\rho'P_1}(x)\\
  &=\sum_{\sigma\in \Omega_{1,2},\rho,\rho'\in \Omega_1}\sum_{i,j}\ind_{\rho l_j m_i' \sigma P_1\rho'}(x)\\
  &=\sum_{\sigma\in \Omega_{1,2},\rho,\rho'\in \Omega_1}\sum_{i,j}\ind_{l_j m_i' \sigma P_1}(\rho x \rho')\\
  &=\sum_{\rho,\rho'\in \Omega_1}\nu(P_1,P_2')(\rho x \rho') \\
  &=\sum_{\kappa \in [W_{1,2} \backslash W_2' / W_{1,2}]} \sum_{\rho,\rho' \in \Omega_1}t_\kappa \ind_{P_1\kappa P_1}(\rho x \rho').
 \end{split}
\end{equation}
Now we decompose $P_1 \kappa P_1=\bigsqcup_r \kappa_r P_1$ and see
\begin{equation}
 \begin{split}
  \sum_{\rho,\rho' \in \Omega_1} \ind_{P_1\kappa P_1}(\rho x \rho')&=\sum_{\rho,\rho' \in \Omega_1}\sum_{r} \ind_{\kappa_rP_1\rho' }(\rho x) \\
 &=\sum_r\sum_{\rho\in \Omega_1} \ind_{\rho \kappa_r P_1' }(x)\\
 &=|\Omega_1^\kappa|\ind_{P_1' \kappa P_1'}(x). 
 \end{split}
\end{equation}
The last equality holds due to \cite[La. 5.7]{LanskyDecomposition}. Putting all of this together we achieve the result.
\end{proof}
It is noteworthy that the double cosets appearing in the sum in the last theorem are no longer necessarily distinct; in fact two cosets $P_1'\kappa P_1'$ and $P_1' \kappa'  P_1'$ with $\kappa,\kappa' \in  [W_{1,2} \backslash W_2' / W_{1,2}]$ coincide if and only if $\kappa =\sigma \kappa' \sigma'$ for suitable $\sigma,\sigma' \in \Omega_1$.

\subsection{The Eichler algebra}
In Example \ref{ExampleC2} we saw that it is possible to obtain the whole (local) Hecke algebra of $\GG$ by only computing intertwining operators in the case where $\GG$ is of type $C_2$. We now want to study in which cases we can expect this to happen and what operators we can still compute this way if we do not obtain the full Hecke algebra. To that end let again $K$ be an open compact subgroup of $\GG(\hk)$.
\begin{Definition}
 Let $\GG$ be split at the finite prime $\fp$ and $K_\fp$ be a hyperspecial maximal compact subgroup of $\GG(k_\fp)$. The Eichler algebra (at $\fp$) of $K$ is the subalgebra of $H_K$ generated by the Eichler elements $\nu(K,K')$ where $K'$ runs over the open compact subgroups of $\GG(\hk)$ such that $K_\fq'=K_\fq$ for all $\fq \neq \fp$ and $K_\fp'$ is a maximal parahoric subgroup of $\GG(k_\fp)$ that contains a common Iwahori subgroup with $K_\fp$.
\end{Definition}

In this terminology Example \ref{ExampleC2} says that for $\GG$ of type $C_2$ and split at $\fp$ the local Eichler algebra at $\fp$ is the full Hecke algebra. Seeing that for $V=k$ the trivial module we can identify $M(V,K)$ with the space of $k$-valued functions on $\GG(k) \backslash \GG(\hk) / K$, the elements in the Eichler subalgebra are characterized as those Hecke operators whose intrinsic combinatorics are already completely determined by the combinatorics of the chambers containing a given hyperspecial point.

\begin{Theorem}
 Let $\GG$ be simply connected and split at the finite prime $\fp$. The local Eichler algebra of $\GG$ at $\fp$ is a polynomial ring and the following table lists the number of indeterminates and the translations in the affine Weyl group whose double cosets generate the local Eichler algebra depending on the extended Dynkin diagram of $\GG$:
\begin{center}
\def\arraystretch{1.2}
\begin{tabular}{l|l|l}
Name &  Dynkin diagram & Generators \\
\hline
$\tilde{A}_n,n\geq 1$ & \An{0.5} &  $t\left(\omega_i^\vee + \omega_{n+1-i}^\vee\right),~1 \leq i \leq \left\lfloor \frac{n}{2}\right\rfloor$  \\
&& $\text{and } t(2\omega_{\frac {n+1} 2}^\vee)\text{ for odd n}$ \\
$\tilde{B}_n,n \geq 3$ & \Bn{0.5} &$t(\omega_{2i}^\vee),~1 \leq i \leq \left\lfloor \frac{n}{2} \right\rfloor,~\text{and } t(2\omega_1^\vee)$ \\
$\tilde{C}_n,n \geq 2$ & \Cn{0.5} &$t(\omega_i^\vee),1\leq i \leq n-1, \text{ and } t(2\omega_n^\vee)$ \\
$\tilde{D}_n,n \geq 4$, & \Dn{0.5} & $t(\omega_{2i}^\vee),~1 \leq i \leq \left\lfloor \frac{n}{2} \right\rfloor-1,~ t(2\omega_1^\vee),$  \\
$n$ even&&$t(2\omega_{n-1}^\vee) \text{ and } t(2\omega_{n}^\vee)$  \\
$\tilde{D}_n,n \geq 5$, &  \Dn{0.5} & $t(\omega_{2i}^\vee),~1 \leq i \leq \left\lfloor \frac{n}{2} \right\rfloor-1,~ t(2\omega_1^\vee),$  \\
$n$ odd& &$t(\omega_{n-1}^\vee+\omega_{n}^\vee)$  \\
$\tilde{E}_6$&\Esix{0.5} &$t(\omega_2^\vee), t(\omega_1^\vee+\omega_6^\vee)$  \\
$\tilde{E}_7$&\Eseven{0.5} &$t(\omega_1^\vee),t(\omega_5^\vee), t(2\omega_6^\vee)$ \\
$\tilde{E}_8$&\Eeight{0.5}\hspace{-11pt} &$t(\omega_1^\vee) , t(\omega_3^\vee)$\\
$\tilde{F}_4$&\Ffour{0.5} &$t(\omega_1^\vee), t(\omega_4^\vee)$ \\
$\tilde{G}_2$&\Gtwo{0.5} &$t(\omega_1^\vee)$ \\
\end{tabular}
\end{center}
Moreover, in the $C_n$-case the Eichler algebra coincides with the full Hecke algebra.
\end{Theorem}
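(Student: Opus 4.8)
The plan is to reduce the whole statement to a computation of leading terms in the spherical Hecke algebra $H_K$. Because $\GG$ is simply connected we have $\Omega=1$ and $\tilde W=W_{af}$, so the hypothesis $W_1\le W_{af}$ of Theorem \ref{VenkovElementAffine} is automatically satisfied. The hyperspecial $K$ corresponds to the finite Weyl group $W_0=\langle s_1,\dots,s_n\rangle$ (delete the affine node $s_0$), and the maximal parahorics $K_j'$ sharing the Iwahori $I$ correspond to the maximal special subgroups $W_j=\langle\tilde S\setminus\{s_j\}\rangle$, $j=1,\dots,n$ (deleting a non-affine node; deleting $s_0$ returns $K$). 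By Theorem \ref{VenkovElementAffine} the associated Eichler elements are $\nu_j:=\nu(K,K_j')=\sum_{\kappa}t_\kappa\ind_{K\kappa K}$ with $\kappa\in[W_{1,2}\backslash W_j/W_{1,2}]$ and all coefficients $t_\kappa>0$. I would then invoke the standard Satake isomorphism $H_K\cong\MC[X_*(\BT)]^{W_0}$, which shows that $H_K$ is commutative and that, under the dominance order on dominant cocharacters, it carries a filtration whose associated graded is the monoid algebra spanned by the orbit sums $m_\lambda$; concretely $\ind_{Kt(\lambda)K}=m_\lambda+\sum_{\mu<\lambda}c_{\lambda\mu}m_\mu$. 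The Eichler algebra $E$ (the subalgebra generated by $\nu_1,\dots,\nu_n$) is thus governed by the maximal dominant cocharacter occurring in each $\nu_j$.

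The next step is to identify that cocharacter. Let $v_j$ be the vertex of the fundamental alcove fixed by $W_j$; since $W_j=\stab_{W_{af}}(v_j)$ one has $W_j=\{\,t((1-w)v_j)w:w\in\bar W_j\,\}$, where $\bar W_j=\{w\in W_0:(1-w)v_j\in X_*(\BT)\}$. As $KwK=K$ for $w\in W_0$ and $Kt(\mu)K=Kt(\mu^+)K$ for the dominant representative $\mu^+$, the double coset of $\kappa=t((1-w)v_j)w$ depends only on the dominant representative of $(1-w)v_j$. Because $w_0v_j$ is the unique antidominant point of the orbit $W_0v_j$, the cocharacter $(1-w_0)v_j$ is already dominant and dominates every $(1-w)v_j$; hence, provided $w_0\in\bar W_j$, the leading term of $\nu_j$ is a positive multiple of $\ind_{Kt(\lambda_j)K}$ with
\[ \lambda_j=(1-w_0)v_j=v_j+v_{j^*}, \]
where $-w_0$ is the (possibly trivial) diagram involution $j\mapsto j^*$. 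Writing $v_j=\omega_j^\vee/a_j$ with $a_j$ the mark of node $j$, this becomes $\lambda_j=2\omega_j^\vee/a_j$ whenever $-1\in W_0$, which is exactly the list appearing in the table.

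To conclude I would prove polynomiality and match the table type by type. Taking symbols, $\operatorname{gr}E$ is generated by the leading terms $m_{\lambda_j}$, and since $m_\lambda m_\mu=m_{\lambda+\mu}+(\text{lower})$ these symbols are algebraically independent as soon as the distinct $\lambda_j$ are linearly independent; then $E$ is a polynomial ring whose generators are read off from the $\ind_{Kt(\lambda_j)K}$. Three points require the actual case analysis. First, the diagram symmetry gives $\lambda_j=\lambda_{j^*}$, which is why types $A_n$, $D_{\mathrm{odd}}$ and $E_6$ carry fewer generators than the rank. Second, for ``degenerate'' nodes with $w_0\notin\bar W_j$ (so $(1-w_0)v_j\notin X_*(\BT)$, e.g. the odd nodes of $\tilde B_n$ or nodes $3,4,5$ of $\tilde E_6$) the leading cocharacter is the strictly smaller $\max_{w\in\bar W_j}((1-w)v_j)^+$; here one computes $\bar W_j$ explicitly and checks that this smaller cocharacter already lies in the submonoid generated by the listed $\lambda_j$, so that the node contributes no new generator. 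Third, one verifies that each $\lambda_j$ in the table indeed lies in $X_*(\BT)=Q^\vee$ and that the listed cocharacters are linearly independent.

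The main obstacle is exactly this degenerate-vertex bookkeeping: bounding $\max_{w\in\bar W_j}((1-w)v_j)^+$ and showing it is a non-negative integral combination of the generators already produced. The final assertion is then the clean end of the analysis: in type $C_n$ we have $-1\in W(C_n)$ and all $2v_j=2\omega_j^\vee/a_j$ lie in $Q^\vee$, so $\lambda_j=\omega_j^\vee$ for $1\le j\le n-1$ and $\lambda_n=2\omega_n^\vee$; these form a $\MZ$-basis of $X_*(\BT)$ and generate the full monoid of dominant cocharacters, whence $\operatorname{gr}E=\operatorname{gr}H_K$ and therefore $E=H_K$.
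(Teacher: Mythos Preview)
Your approach is correct in outline and genuinely different from the paper's. The paper proceeds by brute force: for each type it writes down \emph{all} the $W_0$-double cosets represented in each maximal special subgroup $W_j$ (the $C_n$ argument exhibits the explicit representatives $s_0 s_{[1,0]}\cdots s_{[i-1,0]}$ and counts that these exhaust $W_j\ds W_{0,j}$), so that the Eichler element $\nu_j$ is visibly upper--triangular in the Satake generators and one can solve for each $\ind_{Kt(\lambda)K}$ inductively. Your route is more conceptual: you extract only the \emph{leading} term via the uniform formula $\lambda_j=(1-w_0)v_j=v_j+v_{j^*}$ and then invoke the associated--graded argument. The inequality $((1-w)v_j)^+\le(1-w_0)v_j$ you need is indeed true (write $((1-w)v_j)^+=w'v_j-w'wv_j$ and use that $v_j$ and $-w_0v_j$ are the dominant members of their orbits), so your identification of the leading cocharacter is sound, and for types with $-1\in W_0$ and all $2v_j\in Q^\vee$ it reproduces the table immediately. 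The payoff of your method is a clean, type--independent reason why the Eichler algebra is a polynomial ring (algebraic independence of the $\nu_j$ follows from linear independence of the distinct $\lambda_j$), whereas the paper gets polynomiality in the $C_n$ case only because there $E=H_K$ and $H_K$ is already known to be polynomial.

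There is one point where your write--up is a bit quick. You assert that the generators of $E$ ``are read off from the $\ind_{Kt(\lambda_j)K}$'', but your graded argument only shows that $E$ is polynomial in the $\nu_j$. To match the theorem as stated you must also show $\ind_{Kt(\lambda_j)K}\in E$, and for that the leading term alone is not enough: you need that the \emph{lower} terms of each $\nu_j$ already lie in the subalgebra generated by the previously obtained $\ind_{Kt(\lambda_i)K}$. This is exactly what the paper's full double--coset enumeration provides (and what your ``degenerate--vertex bookkeeping'' would have to supply type by type). In the $C_n$ case this is automatic because your $\lambda_j$ freely generate the entire dominant monoid in $Q^\vee$, so $\operatorname{gr}E=\operatorname{gr}H_K$ forces $E=H_K$ and every $\ind_{Kt(\lambda)K}$ is in $E$; but in the other types you should say explicitly how the induction closes. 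Once that is filled in, your argument is a legitimate alternative proof.
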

\begin{proof}
 The proof of this theorem is a case-by-case check and since all cases work in a very similar manner we will only give the details for the $C_n$-case here.

 Let $\GG$ be of type $C_n$ (simply connected) and split at $\fp$. We can perform our computations locally and for ease of notation we set $F:=k_\fp$. We label the nodes of the extended Dynkin diagram as in the above table and set
\begin{equation}
 W_i:=\langle s_j ~|~0 \leq j \leq n, i \neq j \rangle < W_{af}.
\end{equation}
Choosing an appropriate Iwahori subgroup $I <K_\fp < \GG(F)$ we can assume $P_0:=K_\fp=I W_0 I$. Then the other maximal parahoric subgroups of $\GG(F)$ that share a common Iwahori subgroup with $K_\fp$ are represented by $P_i:=I W_i I,1\leq i \leq n$.

The Hecke algebra $H_{P_0}$ is generated by the characteristic functions of the double cosets $P_0 t(\omega_i^\vee) P_0,1 \leq i \leq n-1,$ and $P_0 t(2\omega_n^\vee)P_0$ whence it suffices to prove that these are also contained in the Eichler algebra. To that end we will show that the $W_0 $-double cosets that have a representative in $W_i$ are represented by $1,t(\omega_1^\vee),...,t(\omega_i^\vee)$ for $1 \leq i \leq n-1$ and by $1,t(\omega_1^\vee),...,t(\omega_{n-1}^\vee),t(2\omega_n^\vee)$ for $i=n$. The assertion then follows from Theorem \ref{VenkovElementAffine}.

We consider the classical realization of a root system of type $C_n$ in the Euclidean space $\MR^n$ with the standard inner product. The set of roots is
\begin{equation}
 \Phi:=\left\{\pm e_i\pm e_j~|~1 \leq i<j\leq n\right\} \cup \left\{\pm 2e_i~|~1\leq i \leq n \right\} 
\end{equation}
and the simple roots are
\begin{equation}
 \alpha_1=e_1-e_2,\alpha_2=e_2-e_3,...,\alpha_{n-1}=e_{n-1}-e_n,\alpha_n=2e_n.
\end{equation}
The coweights can be realized in the same vector space by the usual construction
\begin{equation}
 \alpha^\vee=\frac{2}{\langle \alpha,\alpha\rangle} \alpha.
\end{equation}
Under this identification we have $\omega_i^\vee=\sum_{j=1}^i e_j$ for $1 \leq i \leq n-1$ and $\omega_n^\vee=\frac{1}{2}\sum_{j=1}^n e_j$, in particular $\omega_1^\vee=\alpha_0^\vee$. We have $s_0=s_{\alpha_0}\omega_1^\vee$. If we denote the product $s_is_{i-1}...s_0$ by $s_{[i,0]}$ we compute
\begin{equation}
 W_0 s_0s_{[1,0]}...s_{[i-1,0]}= W_0 t(\omega_i^\vee),1 \leq i \leq n-1, \text{ and } W_0 s_0s_{[1,0]}...s_{[n-1,0]}=W_0t(2\omega_n^\vee).
\end{equation}
Moreover $s_0s_{[1,0]}...s_{[i-1,0]} \in W_i$ whence $W_0 t(\omega_i) W_0$ indeed has a representative in $W_j$ for $j\geq i$ (and $W_0 t(2\omega_n^\vee)W_0$ has a representative in $W_n$). On the other hand we have 
\begin{equation}
 |W_i\ds (W_0 \cap W_i)|=|\langle s_0,...,s_{i-1} \rangle \ds \langle s_1,...,s_{i-1}|=| C_2 \wr S_i \ds S_i | = i+1
\end{equation}
by using the standard isomorphism between a Coxeter group of type $C_i$ and the wreath product $C_2 \wr S_i$ which identifies the standard parabolic subgroup of type $A_{i-1}$ with the subgroup $S_i$. Hence 
\begin{equation}
 1,s_0,s_0s_1s_0,s_0s_{[1,0]}...s_{[i-1,0]}
\end{equation}
is indeed a full system of representatives of $W_i \ds (W_0 \cap W_i)$ which finishes the proof. 
\end{proof}

\section{Computational results}\label{resultssection}
In this section we present some of the computational results we achieved using the method outlined in the previous chapters. The programs used for these computations are available from the author's homepage.

\subsection{Reliability}
First we briefly want to discuss the reliability of our implementation.

The theory of modular forms offers a variety of plausibility checks for our results. When we enumerate a set of representatives of lattices in a given genus we have the mass formula which postulates that the inverses of the stabilizer orders should add up to a certain (precomputed) rational number. Moreover we often obtain several systems of representatives for the same genus from distinct computations which yields an additional check.\\ Furthermore two Hecke operators which are supported at distinct primes (or at the same primes $\fp$, where $K_\fp$ is hyperspecial) necessarily have to commute. This is a particularly strong check since the representing matrices may have - depending on the dimension of the space - several hundred entries, so the probability that two such matrices commute by chance is essentially zero. Moreover the Hecke operators we compute have a prescribed adjoint with respect to the Peterson scalar product (most of them ought to be self-adjoint). Our results passed all of these checks in several hundred sample computations we performed which should be seen as a strong indicator for the validity of our computations.
\subsection{Algebraic modular forms for symplectic groups}
Let $k$ be a totally real number field and $\MH$ a totally definite quaternion algebra over $k$. For $n \in \MN$ the $n \times n$-matrix ring over $\MH$ carries the natural involution $\dagger$ with $M^\dagger=\overline{M}^{tr}$ where $\overline{M}$ is the entrywise quaternionic conjugate of $M$. Now we set $\U_{n,\MH}$ the linear algebraic group over $k$ with $A$-rational points
\begin{equation}
 \U_{n,\MH}(A)=\{ g \in (A \otimes \MH)^{n \times n}|gg^\dagger=I_n \}
\end{equation}
for every commutative $k$-algebra $A$. Since $\MH$ is assumed to be definite the group $\U_{n,\MH}(k_\MR)$ is compact and moreover if $A \otimes \MH \cong A^{2 \times 2}$ (which happens for all but finitely many completions of $k$) we have $\U_{n,\MH}(A) \cong \Sp_{2n}(A)$, whence $\U_{n, \MH}$ is a compact form of $\Sp_{2n}$.

We now choose an $\CO_k$-maximal order $\CO_\MH$ in $\MH$ and consider open compact subgroups of $\U_{n,\MH}(\hk)$ arising from $\CO_\MH$-lattices in $\MH^n$.

Our tables (\ref{HeckeEigenvaluesSP6},\ref{HeckeEigenvaluesSP4Q5},\ref{HeckeEigenvaluesSP4Qtheta7}) give the decomposition of spaces of algebraic modular forms for symplectic groups arising in this way into Hecke eigenspaces. All these computations were performed with respect to trivial weight and the open compact subgroup defined by $\CO_\MH^n$ (corresponding to the so-called principal genus) which means we are only listing eigenvalues in the hyperspecial case where we can actually guarantee this decomposition. We describe the occurring quaternion algebras by their discriminant and the Hecke operators by the prime at which they are supported and the element in the affine Weyl group corresponding to the double coset. For nonrational eigenvalues we provide the minimal polynomial instead and for extensions of $\MQ$ we denote a prime ideal above $p \in \MZ$ by $\fp_p$.

{\small
\begin{table}[htb]
\begin{center}
\def\arraystretch{1.2}
\begin{tabular}{|l|l|l|l|}
\hline
 $\mathrm{disc}$ & $\mathrm{dim}$ & Operator & Eigenvalues \\
\hline
$3$ & $2$ & $h_2(s_0)$ & $126,9$\\
    &     & $h_2(s_0s_1s_0)$ & $2520,-54$\\
    &     & $h_2(s_0s_1s_2s_0s_1s_0)$ & $8640,216$\\
    &     & $h_5(s_0)$ & $19530,810$\\
    &     & $h_5(s_0s_1s_0)$ & $12694500,39780$\\
    &     & $h_5(s_0s_1s_2s_0s_1s_0)$ & $307125000,491400$\\
\hline
$5$ & $3$ & $h_2(s_0)$ & $126,33,-17$\\
    &     & $h_2(s_0s_1s_0)$ & $2520,226,76$\\
    &     & $h_2(s_0s_1s_2s_0s_1s_0)$ & $8640,456,-44$\\
    &     & $h_3(s_0)$ & $1092,100,0$\\
    &     & $h_3(s_0s_1s_0)$ & $98280,1064,364$\\
    &     & $h_3(s_0s_1s_2s_0s_1s_0)$ & $816480,7008,-1792$\\
\hline
$7$ & $5$ & $h_2(s_0)$ & $126,-3,-14,x^2 - 81x + 1512$\\
    &     & $h_2(s_0s_1s_0)$ & $2520,-18,70,x^2 - 708x + 92484$\\
    &     & $h_2(s_0s_1s_2s_0s_1s_0)$ & $8640,0,-110,x^2 - 1548x + 432864$\\
    &     & $h_3(s_0)$ & $1092,-4,-48,x^2 -208x + 2608$\\
    &     & $h_3(s_0s_1s_0)$ & $98280,-276,780,$\\&&&$x^2 - 3444x - 7969824$\\
    &     & $h_3(s_0s_1s_2s_0s_1s_0)$ & $816480,720,-1920,$\\&&&$x^2 - 26928x + 131341824$\\
    &     & $h_5(s_0)$ & $19530,-138,610,x^2 - 1440x + 467100$\\
    &     & $h_5(s_0s_1s_0)$ & $12694500,-72,37240$\\&&&$x^2 - 54780x - 419479200$\\
    &     & $h_5(s_0s_1s_2s_0s_1s_0)$ & $307125000,2448,203440,$\\&&&$x^2 - 941400x + 201197520000$\\
\hline
\end{tabular}
\caption{Hecke eigenvalues for $\Sp_6$ over $\MQ$.}\label{HeckeEigenvaluesSP6}
\end{center}
\end{table}
}

{\small
\begin{table}[htb]
\begin{center}
\def\arraystretch{1.2}
\begin{tabular}{|l|l|l|l|}
\hline
 $\mathrm{disc}$ & $\mathrm{dim}$ & Operator & Eigenvalues \\
\hline
$\fp_2\cdot \fp_5$& $4$ & $h_{\fp_3}(s_0)$ & $7380,580,180,-420$\\
    &     & $h_{\fp_3}(s_0s_1s_0)$ & $597780,12980,7380,4980$\\
    &     & $h_{\fp_{11}}(s_0)$ & $16104,-216,264,504$\\
    &     & $h_{\fp_{11}}(s_0s_1s_0)$ & $1948584,11944,16104,19384$\\
\hline
$\fp_2\cdot \fp_3$& $12$ & $h_{\fp_5}(s_0)$ & $0^{(2)},780,60,48,24^{(4)},-36,$ \\&&&$x^2- 84x - 11520$\\
    &     & $h_{\fp_5}(s_0s_1s_0)$ & $40^{(2)},19500,780,712,-200^{(4)},-460,$ \\&&&$x^2- 2100x + 770400$\\
    &     & $h_{\fp_{11}}(s_0)$ & $144^{(2)},16104,264,960,(x^2 - 528x + 58176)^{(2)},$ \\&&&$-216,x^2 + 264x - 35712$\\
    &     & $h_{\fp_{11}}(s_0s_1s_0)$ & $-2036^{(2)},1948584,16104,27820,$ \\&&&$ (x^2 - 5048x - 2027504)^{(2)},11944,$ \\&&&$x^2 - 25620x
 +161851104$\\
 \hline
\end{tabular}
\caption{Hecke eigenvalues for $\Sp_4$ over $\MQ\left(\sqrt{5}\right)$.}\label{HeckeEigenvaluesSP4Q5}
\end{center}
\end{table}
}
{\small
\begin{table}[htb]
\begin{center}
\def\arraystretch{1.2}
\begin{tabular}{|l|l|l|l|}
\hline
 $\mathrm{disc}$ & $\mathrm{dim}$ & Operator & Eigenvalues \\
\hline
$\fp_2$& $4$ & $h_{\fp_7}(s_0)$ & $2800,272,-112,-176$\\
    &     & $h_{\fp_7}(s_0s_1s_0)$ & $137200,4480,1792,1792$\\
    &    & $h_{\fp_{13}}(s_0)$ & $30940,-28,1092,-812$ \\
    &    & $h_{\fp_{13}}(s_0s_1s_0)$ &$5228860,26236,43836,21532$\\
    &    & $h_{\fp_3}(s_0)$ & $551880,-5544,2968,8568$ \\
    &    & $h_{\fp_3}(s_0s_1s_0)$ & $402320520,417816,595352,812952$ \\
\hline
\end{tabular}
\caption{Hecke eigenvalues for $\Sp_4$ over $\MQ\left(\zeta_7+\zeta_7^{-1}\right)$.}\label{HeckeEigenvaluesSP4Qtheta7}
\end{center}
\end{table}
}

\subsection{Runtime comparison}
Here we give a short comparison of the runtime for computing the action of the full (local) Hecke algebra by using Eichler elements compared to the standard approach following \cite{LanskyDecomposition} or \cite{LanskyPollack}. For this comparison we computed the full action of the local Hecke algebra at various primes acting on the space of algebraic modular forms of trivial weight and level defined by the principal genus of a two-dimensional space over various definite quaternion algebras (in the table represented by their discriminant). The computations were performed on an Intel core i7 processor running at 2.93 GHz. Note that while the Eichler method already comes out ahead in pure numbers it actually computes representatives for additional genera and additional Hecke operators acting on the corresponding spaces. 
{\small
\begin{table}[htb]
\begin{center}
\def\arraystretch{1.1}
\begin{tabular}{|l|l|l|l|}
\hline
 Discriminant & Prime & Runtime Standard & Runtime Eichler\\
\hline
$5$& $2$ & $39.32s$ & $9.83s$\\
   & $3$ & $295.93s$ & $21.93s$\\
\hline
$7$ & $2$ & $44.32s$ & $7.27s$\\
    & $3$ & $213.15s$ & $15.29s$\\
    & $5$ &$3295.69s$& $53.02s$\\
\hline
$11$ & $2$ & $139.75s$ &$34.60s$\\
    & $3$ & $713.32s$ & $69.30s$  \\
    & $5$ &$11344.21s$& $187.27s$\\
\hline
$13$ & $2$ & $81.96s$ & $28.55s$\\
    & $3$ & $634.54s$& $57.80s$\\
    & $5$ &$9017.63s$& $165.29s$\\
 \hline
\end{tabular}
\caption{Runtime comparison of the standard and Eichler method.}\label{RuntimeComparisonTable}
\end{center}
\end{table}
}
\clearpage
\section*{Acknowledgements}\label{ackref}
The results of this article are part of the author's PhD thesis which was supervised by Gabriele Nebe to whom the author wishes to express his deepest gratitude.

\bibliographystyle{abbrv}
\bibliography{Simul}

\end{document}